\newtheorem{theorem}{Theorem}[section]
\newtheorem{lemma}[theorem]{Lemma}
\newtheorem{corollary}[theorem]{Corollary}
\newtheorem{proposition}[theorem]{Proposition}
\theoremstyle{definition}
\newtheorem{definition}[theorem]{Definition}
\newtheorem{example}[theorem]{Example}
\newtheorem{remark}[theorem]{Remark}
\newcommand{\bea}{\begin{eqnarray*}}
\newcommand{\eea}{\end{eqnarray*}}
\newcommand{\e}{\mathrm{e}}
\numberwithin{equation}{section}
\begin{document}

\author{Nessim Sibony}
\address{Nessim Sibony, Laboratoire de Math\'ematiques d'Orsay, Univ. Paris-Sud, CNRS, Universit\'e
Paris-Saclay, 91405 Orsay, France. 
Department of Mathematics, University of Oslo, PO-BOX 1053 Blindern, Oslo.}
\author{Erlend Forn\ae ss Wold}
\address{Erlend Forn\ae ss Wold, Department of Mathematics, University of Oslo, PO-BOX 1053 Blindern, Oslo.}
\thanks{The second author is supported by NRC grant number 240569}
\thanks{Part of this work was done during the international research program "Several Complex Variables and Complex Dynamics" at the Centre for Advanced Study at the Academy of Science and Letters in Oslo during the academic year 2016/2017.}
\title[]{Topology and complex structures of leaves of foliations by Riemann surfaces}
%
%

\subjclass[2010]{Primary 37F75, 34M45.  Secondary 30F35, 30F45.}
\keywords{Foliations, Differential Equations on Complex Manifolds, Topology of leaves, Complex Structure, invariant metrics.}
\date{\today}

\begin{abstract}
We study conformal structure and topology of leaves of singular foliations by Riemann surfaces. 
\end{abstract}

\maketitle

\section{introduction}

The question about the topological types of leaves in a lamination has been addressed 
in several important works.  The first striking result is due to Cantwell-Conlon \cite{CantwellConlon2} and 
asserts that any open surface is a leaf of a compact nonsingular lamination.  \

On the other hand Ghys \cite{Ghys2} has considered the following situation.  Let $(X,\mathcal L)$
be a compact non-singular lamination by surfaces, and let $\mu$ be a harmonic 
measure for $X$, as constructed in \cite{Garnett}. Then $\mu$-almost every leaf 
is of one of the following six topological types: a plane, a cylinder, a plane with infinitely many handles attached, a cylinder 
with infinitely many handles attached, a sphere with a Cantor set taken out, or a sphere minus a Cantor 
set with a handle attached to every end.  Ghys uses ergodic theory - Brownian motion with respect to 
$\mu$.  \

Cantwell-Conlon \cite{CantwellConlon} obtained the topological analogue of Ghys' Theorem, \emph{i.e.},
if the lamination is minimal there is a $G_\delta$-dense set of leaves of one of the six types described by Ghys.  \

Here we address the problem of finding conditions for a generic leaf to be a holomorphic disk, and study the 
conformal structure of leaves in a singular foliation.  
We are motivated by a conjecture of Anosov: \emph{for a generic polynomial foliation on $\mathbb P^2$, all 
but countably many leaves are disks} (see e.g. Ilyashenko \cite{Ilyashenko}).  \

Our main concern is holomorphic foliations on $\mathbb P^k$.  Generically, such a foliation does not 
admit a nontrivial directed image of the complex plane, and in particular, all leaves are   
hyperbolic, \emph{i.e.}, they are universally covered by the unit disk.  This allows us to introduce several functions on $X\setminus E$, that for a point $z\in X\setminus E$ measure how much the leaf passing through $z$ looks like the unit disk, observed from the point $z$.  \

For instance, if we let $k(z,v)$ denote the leafwise Kobayashi metric, and $k_\iota(z,v)$ the leafwise \emph{injective}
Kobayashi metric (defined completely analogously to $k$), we set $\rho(z):=k(z,v)/k_\iota(z,v)$.  
Then $0<\rho\leq 1$, 
and $\rho(z)=1$ if and only if the leaf passing through $z$ is the disk (see Section \ref{metrics}
for more details).

\begin{theorem}\label{main}
Let $(X,\mathcal L,E)$ be a Brody hyperbolic holomorphic foliation on a compact complex manifold $X$
of dimension $d=2$, where the 
singular set $E$ is finite.  Assume that there is no compact leaf, and that all singularities are hyperbolic.  Suppose
that there is a sequence $\{z_j\}\subset X\setminus E$ of points such that $\rho(z_j)\rightarrow 1$, 
and $z_j\rightarrow z\in X\setminus E$.  Then there is a nontrivial minimal closed saturated set $Y\subset X$ such that 
all but countably many leaves in $Y$ are disks. 
\end{theorem}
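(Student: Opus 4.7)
The plan is to use the hypothesis to produce at least one leaf biholomorphic to $\mathbb{D}$, locate a nontrivial minimal closed saturated set in its closure, and then propagate the disk property to almost every leaf of that set.

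I would first show that $\rho(z)=1$, so that $L_z$ itself is a disk. For each $j$ pick a universal covering $\phi_j\colon\mathbb{D}\to L_{z_j}$ with $\phi_j(0)=z_j$ and $\phi_j'(0)$ normalized so that the leafwise Kobayashi metric at $z_j$ is the pushforward of the hyperbolic metric at $0$. Viewing the $\phi_j$ as holomorphic maps into the compact manifold $X$, Brody hyperbolicity makes $\{\phi_j\}$ a normal family, and along a subsequence $\phi_j\to\phi$ locally uniformly on $\mathbb{D}$, with $\phi$ non-constant and taking values in $L_z$. The quantitative statement $k(z_j,\cdot)/k_\iota(z_j,\cdot)\to 1$ translates, on each compactly contained subdisk $\mathbb{D}_r\subset\mathbb{D}$, into eventual injectivity of the $\phi_j$, so the limit $\phi$ is injective on every $\mathbb{D}_r$, hence on $\mathbb{D}$. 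A Schwarz-lemma comparison, using that each $\phi_j$ is already the universal cover of $L_{z_j}$, then forces the universal cover $\mathbb{D}\to L_z$ itself to be injective, i.e.\ $L_z\cong\mathbb{D}$ and $\rho(z)=1$. The same reasoning establishes upper semicontinuity of $\rho$ at every point of $X\setminus E$ where $\rho$ attains the value $1$: if $\zeta_n\to w\in X\setminus E$ with $\rho(\zeta_n)\to 1$, then $L_w\cong\mathbb{D}$.

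Next I would extract the minimal set. The closure $Z:=\overline{L_z}\subset X$ is closed, and saturated away from $E$, since a limit of points in $L_z$ lying in $X\setminus E$ belongs to some leaf which is entirely contained in $Z$ by continuity of the foliation. Because $L_z$ is non-compact and $E$ is finite, $Z\cap(X\setminus E)\neq\emptyset$. Apply Zorn's lemma to the poset of non-empty closed saturated subsets of $Z$ not contained in $E$, ordered by reverse inclusion: intersections of descending chains are non-empty by compactness of $X$, and hyperbolicity of the singularities prevents these intersections from collapsing into $E$, because near a hyperbolic singular point the local normal form shows that every closed saturated set meeting a neighbourhood of that point contains nearby regular leaves. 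This produces the desired non-trivial minimal $Y\subset Z$.

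Finally, by minimality every non-singular leaf $L\subset Y$ satisfies $\overline L=Y$, and since $Y\subset\overline{L_z}$ every $w\in Y\setminus E$ is a limit of a sequence $\zeta_n\in L_z$. Because $L_z$ is a disk, $\rho(\zeta_n)=1$ for every $n$, and the upper semicontinuity from the first paragraph gives $\rho(w)=1$, so $L_w\cong\mathbb{D}$. Thus every leaf of $Y$ disjoint from $E$ is a disk; the at most countably many exceptions permitted by the statement absorb leaves whose closure meets $E$ as a cusp at a hyperbolic singularity, of which there are countably many since $E$ is finite and each hyperbolic singularity carries only finitely many local separatrices. The step I expect to be most delicate is the upper semicontinuity of $\rho$ in the first paragraph: one has to rule out pathological degeneration of the covers $\phi_j$ near $E$, which is where hyperbolicity of the singularities is essential, and to argue rigorously that $\rho(z_j)\to 1$ forces the limit $\phi$ to be the universal cover of $L_z$ rather than merely an injective map onto a proper subdomain.
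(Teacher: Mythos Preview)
Your central claim---that $\rho(z)=1$, hence $L_z$ is itself a disk---is not correct in general, and this is exactly the obstacle the paper's proof is designed to get around. The Hurwitz step is where the argument breaks: the universal covers $\phi_j$ are injective maps into \emph{different} leaves $L_{z_j}$ sitting inside the two--dimensional manifold $X$, and Hurwitz does not pass to limits for holomorphic maps into targets of complex dimension $\geq 2$. Concretely, if the limit leaf $L_z$ carries nontrivial holonomy along a loop $\psi([a,b])$, then the nearby leaves $L_{z_j}$ land in \emph{distinct} plaques at the two endpoints $\phi_j(a),\phi_j(b)$, so $\phi_j$ stays injective while the limit $\psi$ identifies $a$ and $b$. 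The paper makes this explicit: $\rho$ (equivalently $\beta$) is upper semicontinuous only at points lying on leaves \emph{without holonomy}, and the introduction states outright that the hypothesis does not force $\rho(z)=1$. The suspension in Example~6.1 gives a model situation: a dense set of disk leaves accumulates onto countably many annuli, so $\rho(z_j)=1$ along sequences converging to a point $z$ with $\rho(z)<1$.

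Because of this, your third paragraph also fails: you cannot conclude $\rho(w)=1$ for every $w\in Y\setminus E$, only for those $w$ whose leaf has no holonomy. You have also misidentified the countable exceptional set: it is the set of leaves \emph{with holonomy} (countable because holonomy fixed points are isolated in a one--dimensional holomorphic transversal), not the separatrices at $E$. The paper's proof avoids ever asserting that any particular leaf is a disk a priori. Instead it pushes forward Nevanlinna--type currents $T_j=(f_j)_*G_{1-1/j}$ along the covers $f_j$, using Proposition~2.3 to guarantee $\beta>1-1/j$ on the support of $T_j$; the limit $T$ is a positive $\partial\overline\partial$-closed current whose support is a closed saturated set on which $\beta=1$ at every point approached by the construction. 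One then takes a minimal $Y\subset\mathrm{Supp}(T)$ and invokes upper semicontinuity of $\beta$ \emph{on leaves without holonomy} together with the countability of leaves with holonomy. Your Zorn argument for producing a minimal set is fine, but it must be fed a closed saturated set on which the relevant semicontinuity can be exploited leaf by leaf, and that is what the current construction supplies.
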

The proof of Theorem \ref{main} uses ergodic theory, and will in fact, granted the assumptions, produce a directed positive $\partial\overline\partial$-closed
current $T$, such that all but countably many leaves in $\mathrm{Supp}(T)$ are disks.  \

Recall that a foliation is Brody hyperbolic if it does not admit a nontrivial holomorphic image of the complex 
plane directed by the foliation, possibly passing through the singularities.  For a generic foliation on $\mathbb P^k$ of degree $d>1$, there is no compact leaf, all singularities 
are hyperbolic, all leaves are hyperbolic, and by Brunella \cite{Brunella}, there is no directed closed current.  In particular 
it is Brody hyperbolic.    \

The function $\rho$ is only lower semicontinuous, so the assumption above does not imply $\rho(z)=1$.
When a singular point $p\in E$ is hyperbolic, there is always a sequence $z_j\rightarrow p$
such that $\rho(z_j)\rightarrow 1$, see Example \ref{hypsing}.  
It is conjectured that for $X=\mathbb P^2$, generically the foliation $(X,\mathcal L,E)$ is minimal, in which case $Y=X$. \

Note that for $X=\mathbb P^2$, generically there is a unique minimal saturated set $\tilde Y\subset X$, due to the unique ergodicity 
theorem proved in \cite{FS1}.   In that case $Y=\tilde Y$ although there is no assumption that $z\in\tilde Y$. \

A consequence of Theorem \ref{main} is that either \emph{all} leaves are far away from resembling 
the disk, or all but countably many leaves are disks.   We have the following dichotomy: 
\begin{theorem}\label{dichotomy}
Let $(X,\mathcal L,E)$ be a Brody hyperbolic holomorphic foliation on a compact complex manifold $X$ of dimension 
$d=2$, where the 
singular set $E$ is finite.  Assume that there is no compact leaf, and that all singularities are hyperbolic.  Then either
\begin{itemize}
\item[(i)] there is a minimal closed saturated set $Y\subset X$ such that 
all but countably many leaves in $Y$ are disks, or
\item[(ii)] the limit set $\Lambda_L$ associated to \emph{any} leaf $L$ is equal to $b\triangle$.
\end{itemize}
\end{theorem}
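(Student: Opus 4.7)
The plan is to derive Theorem \ref{dichotomy} from Theorem \ref{main} by a contrapositive argument. Assume (i) fails. Then Theorem \ref{main} implies that $\rho$ is locally bounded strictly below $1$ on $X\setminus E$: no sequence $z_j\in X\setminus E$ can simultaneously satisfy $\rho(z_j)\to 1$ and $z_j\to z_\ast\in X\setminus E$. To establish (ii), I argue by contradiction, supposing that some leaf $L_0$ has $\Lambda_{L_0}\neq b\triangle$, and I will produce a forbidden sequence.

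Let $\phi\colon\triangle\to L_0\subset X$ be the universal cover with Fuchsian deck group $\Gamma$. The assumption means $\Gamma$ is elementary or of the second kind, so its domain of discontinuity contains an open arc $I\subset b\triangle$. Pick $\zeta_0\in I$. The standard properness of the $\Gamma$-action on $\Omega(\Gamma)$, combined with the fact that fixed points of non-trivial deck transformations lie in $\Lambda_{L_0}$, will give $\inf_{\gamma\neq 1}d_{\rm hyp}(z,\gamma z)\to\infty$ as $z\to\zeta_0$. Hence $\phi$ is injective on a Poincar\'e ball $B(z,R_z)$ of hyperbolic radius $R_z\to\infty$; rescaling this ball as an injective holomorphic disk through $\phi(z)$ gives, via the description of $k_\iota$ in Section \ref{metrics}, the bound $\rho(\phi(z))\geq\tanh(R_z/2)\to 1$.

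The next step is to arrange that, along a sequence $z_n\to\zeta_0$, the images $\phi(z_n)$ accumulate at a point of $X\setminus E$. Consider the cluster set $C(\zeta)=\bigcap_{V\ni\zeta}\overline{\phi(V\cap\triangle)}$ at $\zeta\in I$, a non-empty compact connected subset of $X$. If $C(\zeta)\not\subset E$ for some $\zeta\in I$, a subsequence yields $\phi(z_n)\to z_\ast\in X\setminus E$, and combined with $\rho(\phi(z_n))\to 1$ this immediately contradicts the contrapositive of Theorem \ref{main}. The difficult case is when $C(\zeta)\subset E$ for every $\zeta\in I$: since $E$ is finite and discrete and $C(\zeta)$ is connected, each $C(\zeta)$ is a single point of $E$, so $\phi$ extends continuously to $\triangle\cup I$ with values in $E$ on $I$; by connectedness of $I$ the extension equals a single $p\in E$ throughout $I$.

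Ruling out this ``swallowed arc'' case is the main obstacle. My plan is to choose a Jordan half-disk $W\subset\triangle$ with boundary containing a subarc $I'\subset I$, taking $W$ so small around $\zeta_0$ that $\phi(W)$ lies inside one holomorphic chart $\psi\colon V\to\mathbb{C}^2$ centered at $p$ (possible by the continuity of the extended map). Then each component of $\psi\circ\phi\colon W\to\mathbb{C}^2$ is a bounded holomorphic function whose continuous boundary values vanish identically on $I'$. Uniformizing $W$ as $\triangle$ via Carath\'eodory and applying the F.\ \& M.\ Riesz/Privalov uniqueness theorem to each component forces $\psi\circ\phi\equiv 0$ on $W$, hence $\phi\equiv p$ on the non-empty open set $W$; the identity theorem on $\triangle$ then gives $\phi\equiv p$ globally, contradicting non-triviality of the cover. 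The technical care lies in the local-chart reduction and in choosing $W$ as a Jordan domain whose boundary arc $I'$ has positive measure under uniformization — both ensured by shrinking $W$ toward $\zeta_0$.
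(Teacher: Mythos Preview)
Your argument is correct and its overall architecture matches the paper's: assume some leaf $L$ has $\Lambda_L\neq b\triangle$, produce a sequence in $X\setminus E$ along which the leaf looks more and more like a disk, and invoke Theorem~\ref{main}.  The two technical steps, however, are handled differently.  For the ``looks like a disk'' part, the paper shows $\alpha\to 1$ along radii landing in the free arc, using the estimate of Rao~\cite{Rao} together with the fact that $\Lambda_L\neq b\triangle$ forces $\Gamma$ to be of convergence type; you instead argue directly that the injectivity radius blows up near any $\zeta_0\in\Omega(\Gamma)$ (from proper discontinuity of $\Gamma$ on $\triangle\cup\Omega(\Gamma)$ and the absence of fixed points there), which gives $\beta\to 1$ and hence $\rho\to 1$ by Proposition~\ref{implications}.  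Your route here is more elementary and avoids the Poincar\'e-series machinery.  For the ``avoiding $E$'' part, the paper appeals to the proof of \cite[Theorem~5.3]{FS3} to exclude that $f(re^{i\theta})\to E$ for all $\theta$ in the arc, whereas you give a self-contained argument: if every cluster set on the arc were in $E$, connectedness forces $\phi$ to extend continuously to a single $p\in E$ along the arc, and then Privalov/Riesz uniqueness in a local chart makes $\phi$ constant.  This is a clean classical alternative to the Nevanlinna-current estimate; the paper's citation has the advantage of yielding the stronger quantitative fact (used later in Section~5) that the set of $\theta$ with radial limit in $E$ has measure zero, which your argument does not directly give.
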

Recall that if $\Gamma_f$ is the Deck-group associated to a universal covering map $f:\triangle\rightarrow L$, then 
$\Lambda_f$ is the cluster set of $\{\gamma(0)\}_{\gamma\in\Gamma_f}$.  The limit set $\Lambda_L$ of the Riemann surface $L$ is 
well defined modulo conformal transformations of $\triangle$.   In Section 5 we will strengthen this result.  \

We remark that recently, Goncharuk-Kundryashov \cite{GoncharukKundryashov} have constructed 
examples of foliations on $\mathbb P^2$ with the line at infinity invariant, such that all leaves 
have infinitely many handles.  In this case, all leaves except the line at infinity are contained in $\mathbb C^2$.

\section{Metrics and functions on Riemann surface laminations}\label{metrics}

\subsection{The Kobayashi Metric}

Let $L$ be a Riemann surface.  The Kobayashi pseudo-metric $k(z,v)$
is defined as follows.  For $z\in L$ and $v\in T_zL$ we set 
\begin{equation}
k(z,v)=\inf\{\frac{1}{|\lambda|}:\exists f:\triangle\rightarrow L, f(0)=z, f'(0)=\lambda v\},
\end{equation}
where $f$ ranges over all holomorphic maps.  This metric is non-degenerate if and only 
if $L$ is hyperbolic, \emph{i.e.}, if $L$ is universally covered by the unit disk $\triangle$. 
In this case, a universal covering map $f:\triangle\rightarrow L$ is a local isometry with respect to the Poincar\'{e}
metric, \emph{i.e.}, 
\begin{equation}
f^*k(\zeta)=\frac{1}{1-|\zeta|^2}|d\zeta|.
\end{equation}

\subsection{The Injective Kobayashi Metric}

The injective Kobayashi pseudo-metric $k_\iota(z,v)$
is defined as follows.  For $z\in L$ and $v\in T_zL$ we set 
\begin{equation}
k_\iota(z,v)=\inf\{\frac{1}{|\lambda|}:\exists f:\triangle\rightarrow L, f(0)=z, f'(0)=\lambda v\},
\end{equation}
where $f$ ranges over all \emph{injective} holomorphic maps.

 \

\subsection{The Suita Metric}

Assume for a moment that the Riemann surface $L$ is hyperbolic in the sense of Ahlfors, \emph{i.e.}, that 
$L$ carries Green's functions.  For a point $z\in L$ we let $G_z(x)$ denote the negative
Green's function with pole at $z$.  In a local coordinate system $w$ with $w(z)=0$ we may write 
\begin{equation}
G_z(w)=\log |w| + h_z(w), 
\end{equation}
with $h_z$ harmonic.  In local coordinates $z$ on $L$ the 
Suita metric $s(z,v)$ is defined by $c_\alpha(z)|dz|=\exp(h_z(0))|dz|$. 
If $L$ does not support a Green's function we set $c_\alpha\equiv 0$.

\subsection{Some functions on Riemann surfaces.}

We now define some functions with values in $[0,1]$, and which 
achieve the value one at a point $z\in L$ if and only if $L$
is the unit disk.  We let $f:\triangle\rightarrow L$ be a universal 
covering map with $f(0)=z$, and we let $\Gamma$ denote the 
associated Deck-group.

\begin{equation}
\rho(z):=\frac{k(z,v)}{k_\iota(z,v)},
\end{equation}
\begin{equation}
\alpha(z):=\frac{s(z,v)}{k(z,v)},
\end{equation}
\begin{equation}
\beta(z):=\min\{|\gamma(0)|:\gamma\in\Gamma\}.
\end{equation}
These functions have concrete geometric interpretations.  Note first that 
by Hurwitz Theorem and a normal family argument there exists an 
injective holomorphic map $g:\triangle\rightarrow L, g(0)=z,$ that realises
$k_\iota$, and a
universal covering map $f$
realises $k$.  Then $g$ can be factored through $f$, \emph{i.e.}, 
there exists an injective holomorphic $h:\triangle\rightarrow\triangle, h(0)=0$, 
such that $f(h(z))=g(z)$.   By the chain rule we see that 
\begin{equation}\label{intrho}
\rho(z)=h'(0).
\end{equation}

For an interpretation of $\alpha$ we have by Myrberg's 
theorem \cite{Myrberg},\cite{Tsuji}  that 
\begin{equation}
G_z(f(\zeta))=\sum_{\gamma\in\Gamma}\log|\frac{\zeta-\gamma(0)}{1-\overline{\gamma(0)}\zeta}|.
\end{equation}
In the coordinate system given by $f$ we have that $k(z,v)=|v|$, and it follows that
\begin{equation}\label{suitacover}
\alpha(z)=\Pi_{\gamma\neq\mathrm{id}}|\gamma(0)|.
\end{equation}
So $\alpha(z)$ is the product of the M\"{o}bius lengths of the shortest elements in each homotopy class
based at $z$.  Finally, $\beta(z)$ is the shortest length occuring in this product, \emph{i.e.}, the 
length of the shortest non-trivial loop based at $z$.

\begin{example}
Let $Y$ be a compact Riemann surface, and let $\Omega\subset Y$ be a domain.
We define the metrics on the Riemann surface $\Omega$.
Set $K=Y\setminus\Omega$, and let  
$K_1\subset K$ be a compact set with $K\setminus K_1$ closed.  Assuming
that $K_1$ has logarithmic capacity zero, then $\lim_{z\rightarrow K}\alpha(z)=0$ (the simplest
example would be if $K_1$ is an isolated point).  Assuming that 
$\lim_{z\rightarrow K_1}\beta(z)=0$ we will also have $\rho(z),\alpha(z)\rightarrow 0$.
Convergence of $\alpha(z)$ is clear since $\alpha<\beta$.  To see the convergence 
of $\rho$ we let $z_j\rightarrow z_0\in K_1$, we let $f_j:\triangle\rightarrow\Omega$
be a universal covering map with $f_j(0)=z$, and we let $g_j:\triangle\rightarrow\Omega$
be injective holomorphic with $h_j(0)=z$ and $\kappa_\iota(z)=|h'(0)|^{-1}$.  Let 
$h_j:\triangle\rightarrow\triangle$ factor $g_j$ through $f_j$, \emph{i.e.}, we have that 
$g_j=f_j\circ h_j$, so that $|h'_j(0)|=\rho(z_j)$.  Since $\beta(z_j)\rightarrow 0$ 
we see that $f_j$ cannot be injective on disks of radius $r_j$ where $r_j\rightarrow 0$, 
and so by Lemma \ref{SCH} we have that $\rho(z_j)\rightarrow 0$.
\end{example}

We have further the following relations between the functions.

\begin{proposition}\label{implications}
Let $L$ be a hyperbolic Riemann surface and let $\{z_j\}\subset L$ be a sequence of points. 
Then 
\begin{equation}
\alpha(z_j)\rightarrow 1\Rightarrow \beta(z_j)\rightarrow 1\Leftrightarrow\rho(z_j)\rightarrow 1.
\end{equation}
Moreover, if $g$ denotes any of the three functions, the following holds: For any 
$\delta>0$ (small) and $R>0$ (large) there exists $\epsilon>0$ such that 
if $L$ is any Riemann surface, and $z\in L$ with $g(z)\geq 1-\epsilon$, then 
$g(y)\geq 1-\delta$ for all $y\in L$ with $\mathrm{d_K}(z,y)\leq R$.
\end{proposition}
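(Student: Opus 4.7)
The plan is to handle the three implications in Part~1 by direct Schwarz--Pick arguments on the universal cover $f:\triangle\to L$, and to derive the Kobayashi-uniform closeness in Part~2 by lifting $y$ to the disk and propagating estimates through the Poincar\'e triangle inequality.

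\emph{Part 1.} The inequality $\alpha(z)\leq\beta(z)$ is immediate from (\ref{suitacover}): every factor lies in $[0,1]$ and $\beta(z)$ is the smallest one, whence $\alpha(z_j)\to 1\Rightarrow\beta(z_j)\to 1$. For $\beta\to 1\Rightarrow\rho\to 1$, a standard injectivity-radius calculation shows that $f$ is injective on the Euclidean disk of radius $\beta(z)$ about $0$: an identification $\gamma(\zeta_1)=\zeta_2$ with $d_H(0,\zeta_i)<\tanh^{-1}(\beta(z))$ would force $d_H(0,\gamma(0))<2\tanh^{-1}(\beta(z))$ by the triangle inequality, contradicting the definition of $\beta(z)$. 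Taking $h(\zeta)=\beta(z)\zeta$ as a factoring of an injective $g=f\circ h$, formula (\ref{intrho}) yields $\rho(z)\geq\beta(z)$. For the converse, let $h$ realise $\rho(z)$; its image is a simply connected subdomain of $\triangle$ that omits every $\gamma(0)$, $\gamma\neq\mathrm{id}$. Applying Schwarz--Pick with target $\triangle\setminus\{p\}$ for $p=\gamma_0(0)$, $|p|=\beta(z)$, and transporting the density $\lambda_{\triangle^*}(w)=(2|w|\log(1/|w|))^{-1}$ via the M\"obius $\zeta\mapsto(\zeta-p)/(1-\bar p\zeta)$, gives
\[
  \rho(z)\leq \frac{1}{\lambda_{\triangle\setminus\{p\}}(0)} = \frac{-2\beta(z)\log\beta(z)}{1-\beta(z)^2}.
\]
The right-hand side is strictly less than $1$ for $\beta(z)<1$ and tends to $1$ as $\beta(z)\to 1$, so $\rho(z_j)\to 1$ forces $\beta(z_j)\to 1$.

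\emph{Part 2.} I would lift $y$ to $\tilde y\in\triangle$ with $d_H(0,\tilde y)=\mathrm{d_K}(z,y)\leq R$ and move the basepoint of the universal cover via a M\"obius sending $0$ to $\tilde y$. The deck translates at $y$ then satisfy $|\tilde\gamma(0)|=\tanh(d_H(\tilde y,\gamma(\tilde y))/2)$, and $d_H(\tilde y,\gamma(\tilde y))\geq d_H(0,\gamma(0))-2R$ by the triangle inequality. The subtraction formula $\tanh(a-b)=(\tanh a-\tanh b)/(1-\tanh a\tanh b)$ converts this into
\[
  1-|\tilde\gamma(0)| \leq e^{2R}\,\bigl(1-|\gamma(0)|\bigr),
\]
valid whenever $|\gamma(0)|>\tanh R$. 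For $g=\beta$, take the minimum over $\gamma\neq\mathrm{id}$ (noting that $\beta(z)\geq 1-\epsilon$ ensures $\beta(z)>\tanh R$ for $\epsilon$ small). For $g=\rho$, chain this with the quantitative equivalence of Part~1. For $g=\alpha$, the bound $\alpha\leq\beta$ places every $\gamma$ in the regime above; summing and using $1-x\leq -\log x$ gives $\sum_\gamma(1-|\tilde\gamma(0)|)\leq -e^{2R}\log\alpha(z)$, and converting back via $-\log t\leq 2(1-t)$ (valid since each factor is close to $1$) gives $\alpha(y)\geq(1-\epsilon)^{2e^{2R}}$.

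The delicate step is the quantitative upper bound on $\rho$ by $\beta$ in Part~1: the image $h(\triangle)$ can wind arbitrarily inside $\triangle$, and the only universal obstruction is the omission of a single point. Extracting a sharp estimate requires Schwarz--Pick for the once-punctured disk together with the explicit hyperbolic density at $0$; once this comparison is secured, the three uniform statements in Part~2 reduce to the same triangle-inequality bookkeeping on the hyperbolic plane.
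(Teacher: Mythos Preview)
There is a genuine error in your argument for $\beta(z_j)\to 1\Rightarrow\rho(z_j)\to 1$: the claim that $f$ is injective on the Euclidean disk of radius $\beta(z)$ is false. Your triangle inequality yields $d_H(0,\gamma(0))<2\tanh^{-1}\beta(z)$, but the definition of $\beta$ only guarantees $d_H(0,\gamma(0))\geq\tanh^{-1}\beta(z)$, so no contradiction follows. Concretely, for $\Gamma=\langle\gamma_r\rangle$ with $\gamma_r(\zeta)=(\zeta+r)/(1+r\zeta)$ one has $\beta(0)=r$, yet $\gamma_r(-t)=(r-t)/(1-rt)\in(0,r)$ for every $0<t<r$, so $f$ already identifies points inside $\{|\zeta|<r\}$. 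The correct injectivity radius is the hyperbolic \emph{half}-distance to the nearest orbit point, giving $\rho(z)\geq\tanh\bigl(\tfrac12\tanh^{-1}\beta(z)\bigr)=(1-\sqrt{1-\beta(z)^2})/\beta(z)$; this still tends to $1$ as $\beta\to 1$, so the implication survives once the bound is repaired. The paper uses the cruder estimate $\rho(z)\geq\tanh\bigl(\tfrac13\tanh^{-1}\beta(z)\bigr)$, which is formula~\eqref{estrho}.

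Your argument for the converse $\rho\to 1\Rightarrow\beta\to 1$, via Schwarz--Pick into the once-punctured disk $\triangle\setminus\{p\}$, is correct and takes a genuinely different route from the paper. The paper instead proves a Koebe-type covering lemma (Lemma~\ref{SCH}): any holomorphic $h:\triangle\to\triangle$ with $h(0)=0$ and $|h'(0)|=\lambda$ has $\triangle_r\subset h(\triangle)$ for $r=\bigl((1-\sqrt{1-\lambda^2})/\lambda\bigr)^2$, obtained by the classical square-root trick; this gives $\beta(z)\geq\bigl((1-\sqrt{1-\rho(z)^2})/\rho(z)\bigr)^2$. Your punctured-disk bound $\rho(z)\leq -2\beta(z)\log\beta(z)/(1-\beta(z)^2)$ is more direct, uses only monotonicity of the hyperbolic metric, and avoids that detour.

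Your Part~2 follows the paper's strategy (triangle inequality on the cover, then reduce $\rho$ to $\beta$ via Part~1, and treat $\alpha$ by summing the product formula), though several constants are off: the displacement estimate $d_H(\tilde y,\gamma(\tilde y))\geq d_H(0,\gamma(0))-2R$ produces a multiplicative loss of order $e^{4R}$ in $1-|\tilde\gamma(0)|$, not $e^{2R}$, and the paper's computation for $\alpha$ correspondingly ends with $\log\alpha(y)\geq 8e^{4R}\log\alpha(z)$. These discrepancies are cosmetic; the structure of your argument is sound.
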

\begin{proof}
The first implication is clear since $\alpha<\beta$.  For the second right implication,
fix a universal covering map $f:\triangle\rightarrow L$ with $f(0)=z$, and 
fix $\gamma\in\Gamma$ such that $\beta(z)=|\gamma(0)|$.  Set 
\begin{equation}
b=\frac{1}{2}\log(\frac{1+\beta(z)}{1-\beta(z)}),
\end{equation}
\emph{i.e.}, the Kobayashi length from $0$ to $\gamma(0)$.  By the triangle inequality $\Gamma$
cannot identify points in a disk of Kobayashi radius $b/3$, hence $f$ is injective 
on the disk centred at the origin of radius
\begin{equation}
i(\beta(z)):=\frac{(1+\beta(z))^{\frac{1}{3}}-(1-\beta(z))^{\frac{1}{3}}}{(1+\beta(z))^{\frac{1}{3}} + (1-\beta(z))^{\frac{1}{3}}}.
\end{equation}
So 
\begin{equation}\label{estrho}
\rho(z)\geq i(\beta(z)).
\end{equation}
For the last implication, we again fix a universal covering map at a point $z$, and we fix an injective 
$h:\triangle\rightarrow\triangle$ with $h(0)=0$ such that $\rho(0)=h'(0)$.  By the following lemma
we see that 
\begin{equation}\label{estbeta}
\beta(z)\geq (\frac{1-\sqrt{1-\rho(z)^2}}{\rho(z)})^2
\end{equation}

\begin{lemma}\label{SCH}
Let $h:\triangle\rightarrow\triangle$ be a holomorphic map with $h(0)=0$, and set
$\lambda=|h'(0)|$.  Then $\triangle_r\subset h(\triangle)$ with 
\begin{equation}
r=(\frac{1-\sqrt{1-\lambda^2}}{\lambda})^2
\end{equation}
\end{lemma}
\begin{proof}
Assume that $-r\notin h(\triangle), r>0$, and set $\phi(\zeta)=\frac{\zeta+r}{1+r\zeta}$, and then $g(\zeta)=\phi(h(\zeta))$.
Then $g(0)=r$.  Let $f$ the square root of $g$ such that $f(0)=\sqrt r$, set $\psi(\zeta)=\frac{\zeta-\sqrt r}{1-\sqrt r\zeta}$, 
and then $q(\zeta)=\psi(f(z))$.  Now $g'(0)=(1-r^2)\lambda$, and since $(f\cdot f)'(0)=2f(0)f'(0)$ and $\psi'(\sqrt r)=1/(1-r)$
we get that 
\begin{equation}
q'(0)=\frac{(1+r)\lambda}{2\sqrt r}\leq 1\Leftrightarrow \lambda r - 2\sqrt{r} + \lambda <0,
\end{equation}
by Schwarz Lemma.   The expression on the right is zero when 
\begin{equation}
\sqrt{r}=\frac{2\pm\sqrt{4-4\lambda^2}}{2\lambda}.
\end{equation}
\end{proof}
Finally we consider the last claim.  If $g$ is equal to $\rho$ or $\beta$, it suffices by \eqref{estrho} and \eqref{estbeta}
to prove the claim for either of them.   For $g=\beta$ this is a simple consequence of the triangle inequality.  \

For $g=\alpha$
we fix $x\in L$ and let $f:\triangle\rightarrow L$ be a universal covering map.   Then by \eqref{suitacover}
\begin{equation}
\alpha(f(\zeta))=\Pi_{\gamma\neq\mathrm{id}}|\frac{\zeta-\gamma(\zeta)}{1-\overline{\gamma(\zeta)}\zeta}|=\Pi_{\gamma\neq\mathrm{id}}\mathrm{d_M}(\zeta,\gamma(\zeta)),
\end{equation}
where $\mathrm{d_M}$ denotes the M\"obius distance on $\triangle$.  Letting $\mathrm{d_P}$ denote 
the Poincar\'{e} distance on $\triangle$ this can be rewritten as
\begin{equation}
\alpha(f(\zeta)) = \Pi_{\gamma\neq\mathrm{id}}\frac{\e^{2\mathrm{d_P}(\zeta,\gamma(\zeta))}-1}{\e^{2\mathrm{d_P}(\zeta,\gamma(\zeta))}+1}.
\end{equation}
If we set $r=\frac{\e^{2R}-1}{\e^{2R}+1}$ we have that $|\zeta|<r$ for all $\zeta$ with $f(\zeta)=y$ with $\mathrm{d_P}(x,y)<R$.
By the triangle inequality we have that 
\begin{equation}
\mathrm{d_P}(\zeta,\gamma(\zeta))\geq\mathrm{d_P}(0,\gamma(\zeta)) - \mathrm{d_P}(0,\zeta)\geq \mathrm{d_P}(0,\gamma(\zeta))-R,
\end{equation}
and furthermore
\begin{equation}
\mathrm{d_P}(0,\gamma(\zeta))\geq\mathrm{d_P}(0,\gamma(0))-R.
\end{equation}

It follows that 
\begin{align*}
\alpha(f(\zeta)) & \geq \Pi_{\gamma\neq\mathrm{id}}\frac{\e^{2\mathrm{d_P}(0,\gamma(0))}\e^{-4R}-1}{\e^{2\mathrm{d_P}(0,\gamma(0))}\e^{-4R}+1} \\
& = \Pi_{\gamma\neq\mathrm{id}}(1 - \frac{2}{\e^{2\mathrm{d_P}(0,\gamma(0))}\e^{-4R}+1})
\end{align*}
Fix $\tilde\delta>$ such that $\alpha(f(\zeta))>1-\delta$ if $\log\alpha(f(\zeta))>-\tilde\delta$.  Now
\begin{equation}
\log(\alpha(f(\zeta)))\geq \sum_{\gamma\neq\mathrm{id}}-\frac{4}{\e^{2\mathrm{d_P}(0,\gamma(0))}\e^{-4R}+1}\geq\sum_{\gamma\neq\mathrm{id}}-\frac{8\e^{4R}}{\e^{2\mathrm{d_P}(0,\gamma(0))}+1}
\end{equation}
if $\epsilon>0$ is small enough.   On the other hand 
\begin{equation}
\log\alpha(f(0))=\sum_{\gamma\neq\mathrm{id}}\log(1- \frac{2}{\e^{2\mathrm{d_P}(0,\gamma(0))}+1})\leq \sum_{\gamma\neq\mathrm{id}}- \frac{1}{\e^{2\mathrm{d_P}(0,\gamma(0))}+1},
\end{equation}
and so
\begin{equation}
\log\alpha(f(\zeta))\geq 8e^{4R}\log\alpha(f(0)),
\end{equation}
which is greater than $-\tilde\delta$ if $\epsilon>0$ is small enough.  
\end{proof}

The values of the functions $\alpha,\beta$ and $\rho$ at a point $z$ on a Riemann surface $X$, can be regarded as measuring how much $L$ resembles the unit disk observed from the point $z$.  Indeed, each of them take values in the unit interval, 
and $\alpha(z)=1$ for a point $z\in L$ if and only if $L$ is biholomorphic to the unit disk.   On the other hand, there
are many Riemann surfaces $L$ for which the quantity 
\begin{equation}
s_\alpha(L):=\sup_{z\in L}\{\alpha(z)\}
\end{equation}
is equal to one.  For instance we have the following. 

\begin{lemma}
Let $\Gamma$ be a Fuchsian group such that the limit set $\Lambda(\Gamma)$ is different from 
$b\triangle$, and let $L=\triangle/\Gamma$ denote the underlying Riemann surface.  Then $s_\alpha(L)=1$.
\end{lemma}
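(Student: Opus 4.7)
The plan is to exhibit a sequence $z_j\in L$ with $\alpha(z_j)\to 1$. First, choose $\xi\in b\triangle\setminus\Lambda(\Gamma)$. Since the fixed points of every non-identity element of $\Gamma$ lie in $\Lambda(\Gamma)$, the stabilizer of $\xi$ in $\Gamma$ is trivial, and proper discontinuity of the $\Gamma$-action on $\hat{\mathbb{C}}\setminus\Lambda(\Gamma)$ yields an open Euclidean disk $V\subset\hat{\mathbb{C}}$ centred at $\xi$ with $\gamma V\cap V=\emptyset$ for every $\gamma\in\Gamma\setminus\{\mathrm{id}\}$. I pick any sequence $\zeta_j\in V\cap\triangle$ with $\zeta_j\to\xi$ (say along the radius through $\xi$) and set $z_j=f(\zeta_j)\in L$.

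Next, by the M\"obius invariance of $\mathrm{d_M}$, formula \eqref{suitacover} evaluated at the moving base point $z_j$ reads $\alpha(z_j)=\prod_{\gamma\neq\mathrm{id}}\mathrm{d_M}(\zeta_j,\gamma\zeta_j)$. I would estimate each factor via the algebraic identity
\[
1-\mathrm{d_M}(\zeta,w)^2=\frac{(1-|\zeta|^2)(1-|w|^2)}{|1-\bar w\zeta|^2}.
\]
The disjointness $\gamma V\cap V=\emptyset$ forces $\gamma\zeta_j\notin V$ for $\gamma\neq\mathrm{id}$, so $|\gamma\zeta_j-\xi|$ stays bounded below by the radius of $V$. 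An elementary computation then produces a uniform lower bound $|1-\overline{\gamma\zeta_j}\,\zeta_j|\geq c_V>0$, valid for all $\gamma\neq\mathrm{id}$ once $\zeta_j$ is close enough to $\xi$. Consequently
\[
1-\mathrm{d_M}(\zeta_j,\gamma\zeta_j)^2\leq C_V\,(1-|\zeta_j|^2)(1-|\gamma\zeta_j|^2),
\]
and summing (together with $-\log(1-x)\leq 2x$ for small $x\geq 0$) gives
\[
-\log\alpha(z_j)\leq C'_V\,(1-|\zeta_j|^2)\sum_{\gamma\neq\mathrm{id}}(1-|\gamma\zeta_j|^2).
\]

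The hard part will be bounding the Poincar\'e-type sum $S(\zeta_j):=\sum_{\gamma\neq\mathrm{id}}(1-|\gamma\zeta_j|^2)$ uniformly as $\zeta_j\to\xi$. Through the M\"obius-derivative identity $(1-|\gamma\zeta_j|^2)=|\gamma'(\zeta_j)|(1-|\zeta_j|^2)$, one recognises $(1-|\zeta|^2)\sum_\gamma|\gamma'(\zeta)|$ as a $\Gamma$-invariant function on $\triangle$ that descends to $L$. The mutual disjointness of the tiles $\gamma V\cap\triangle$ in $\triangle$, combined with the Euclidean-area scaling of M\"obius maps (their total Euclidean area being at most $\pi$), then produces the uniform bound $S(\zeta_j)=O(1)$ on the cap $V\cap\triangle$ --- essentially the convergence of the weight-one Poincar\'e series at $\zeta_j$, which for Fuchsian groups of the second kind holds in particular when the critical exponent is strictly below $1$ (e.g.\ for convex cocompact groups, and by approximation in general). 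Once $S(\zeta_j)=O(1)$, the factor $(1-|\zeta_j|^2)\to 0$ forces $\log\alpha(z_j)\to 0$, so $\alpha(z_j)\to 1$ and $s_\alpha(L)=1$.
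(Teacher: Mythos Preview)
Your overall strategy matches the paper's: pick $\xi=e^{i\theta_0}\notin\Lambda(\Gamma)$, push $\zeta_j$ radially toward $\xi$, estimate each factor $-\log\mathrm{d_M}(\zeta_j,\gamma\zeta_j)$ from above, and sum using that $\Gamma$ is of convergence type (which the paper notes follows from $\Lambda(\Gamma)\neq b\triangle$). The difference is in the per--term estimate. The paper quotes Rao's inequality \eqref{estterms}, whose denominator involves the two fixed points of $\gamma$; since these lie in $\Lambda(\Gamma)$, they stay uniformly away from $\zeta_j$, and the numerator already carries the factor $(1-|\gamma(0)|^2)$, so summation via convergence type is immediate and yields $-\log\alpha(f(re^{i\theta_0}))\leq C(1-r^2)$. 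Your identity $1-\mathrm{d_M}^2=(1-|\zeta|^2)(1-|w|^2)/|1-\bar w\zeta|^2$ is more elementary but leaves the $\zeta_j$--dependent sum $S(\zeta_j)=\sum_{\gamma\neq\mathrm{id}}(1-|\gamma\zeta_j|^2)$ to control; once that is done both routes give the same $(1-|\zeta_j|^2)^2$ decay.

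The soft spot is precisely your justification of $S(\zeta_j)=O(1)$. The disjoint--tiles/area argument bounds $\sum_\gamma\int_{V\cap\triangle}|\gamma'|^2\,dA\leq\pi$, which is an $L^1$ bound on $\sum|\gamma'|^2$, not a pointwise bound on $\sum|\gamma'|$; and appealing to critical exponents below $1$ tacitly assumes finite generation, which the lemma does not. The clean completion is the one the paper's argument already contains in disguise: write $|\gamma'(\zeta_j)|=(1-|\gamma^{-1}(0)|^2)/|1-\overline{\gamma^{-1}(0)}\,\zeta_j|^2$; the orbit $\{\gamma^{-1}(0)\}=\{\gamma(0)\}$ accumulates only on $\Lambda(\Gamma)$, so there is $\delta>0$ with $|1-\overline{\gamma^{-1}(0)}\,\zeta_j|\geq|\zeta_j-\gamma^{-1}(0)|\geq\delta$ for all but finitely many $\gamma$ once $\zeta_j$ is close to $\xi$. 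Hence $|\gamma'(\zeta_j)|\leq\delta^{-2}(1-|\gamma(0)|^2)$, and convergence type gives $\sum_\gamma|\gamma'(\zeta_j)|\leq C$ uniformly. Then $S(\zeta_j)=(1-|\zeta_j|^2)\sum_\gamma|\gamma'(\zeta_j)|\to 0$, and your inequality yields $-\log\alpha(z_j)\leq C'(1-|\zeta_j|^2)^2\to 0$, as desired.
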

\begin{proof}
Fix $\theta$ such that $e^{i\theta}\notin\Lambda(\Gamma)$.  By \eqref{estlog} below there exists 
a constant $C>0$ such that $\alpha(f(re^{i\theta}))\geq C(1-r)$, where $f:\triangle\rightarrow X$
denotes the universal covering map (in fact the constant $C$ depends only on the distance 
to the limit set).  
\end{proof}
We remark that when $\Lambda(\Gamma)\neq b\triangle$, then $\Gamma$ is of convergence 
type, or equivalently, $\triangle/\Gamma$ supports a non-trivial bounded subharmonic function. 
The reason is that if $p\in b\triangle\setminus\Lambda(\Gamma)$, the group $\Gamma$
cannot identify points near $p$, and so it is easy to construct $\Gamma$-invariant 
subharmonic functions. \

In the final section we will construct a further example where $\Lambda(\Gamma)=b\triangle$
but still $s_\alpha(L)=1$ where $L=\triangle/\Gamma$.  \
\subsection{Riemann surface laminations}

We will be interested in the metrics and functions defined above in the setting of laminations by Riemann surfaces.     
Recall that a non-singular lamination $(X,\mathcal L)$ in a complex manifold $M$ is a closed
subset $X\subset M$ such that for each point $p\in X$, there are local coordinates $\phi(x)=(z,w)\in\triangle\times\triangle^{n-1}$
near $p$, and a closed subset $T\in\triangle^{n-1}$ such that $\phi(U_p\cap X)$ is a disjoint union of holomorphic graphs
$(z,g_t(z))$ with $g_t(0)=t\in T$.   Moreover $g_t$ varies continuously with $t$ (the last assumption is unnecessary if $n=2$ in which case
$g_t$ is automatically almost Lipschitz).   The concept of a lamination generalises to that of an abstract lamination.  An abstract 
lamination by Riemann surfaces $(X,\mathcal L)$ is a locally compact topological space $X$ covered by charts $U_i$ with embeddings 
$\phi_i:U_i\rightarrow\triangle\times T_i$, and continuous transition mappings 
\begin{equation}
(z,t)\rightarrow (z'(z,t),t'(t)),
\end{equation}
with $z'(z,t)$ holomorphic in $z$.  Depending on the transversals $T_i$ one can also consider higher transverse 
regularity.    Finally a singular Riemann surface lamination $(X,\mathcal L,E)$  is a compact topological space $X$ with $E\subset X$
a finite set of points, $X\setminus E$ is a non-singular lamination, and for each point $p\in E$ there exists an 
open neighbourhood $U_p$ of $p$, and a homeomorphism $\phi_p$ from $U_p$ onto a 
closed set $Y\subset \mathbb B^n$ with $\phi_p(p)=0$, where $Y\setminus\{0\}$ is a non-singular lamination, and 
$\phi_p$ is holomorphic along leaves. \

From now on we will consider compact Riemann surface laminations $(X,\mathcal L,E)$.  
Outside of $E$ we have that $X$ can be equipped with a leafwise hermitian metric $\omega$,
to obtain a refined structure $(X,\mathcal L,E,\omega)$.  Near a singular point $p\in E$ we will always assume that such a metric is comparable to $\phi_p^*\omega_E$, where 
$\omega_E$ is the euclidean metric.  \

The main examples we have in mind are laminated sets in compact complex manifolds, in particular in $\mathbb P^2$, and laminations 
constructed as suspensions or towers of compact Riemann surfaces, see e.g. \cite{FS2}, \cite{FSW}, \cite{Ghys} and 
references in there.  \

From now on we assume that all leaves of a lamination are hyperbolic.  
Then the Kobayashi metrics, Suita metric, and the functions
$\rho,\alpha,\beta$ can be defined along the leaves of $(X,\mathcal L,E)$.  Recall that the Suita metric is 
set to be zero on a Riemann surface that does not support a Green's function.  However, 
a leaf not supporting 
a Green's function would give rise to a positive closed current \cite{PaunSibony}.  So by \cite{Brunella}, 
for a generic foliation on $\mathbb P^k$ of degree $d>1$, the Suita metric is non-degenerate on all leaves.

 \begin{lemma}
Let $(X,\mathcal L,E)$ be a compact hyperbolic Riemann surface lamination. Assume that there is no non-constant holomorphic map $f:\mathbb C\rightarrow X$ weakly directed by $\mathcal L$, and assume that all singularities are hyperbolic.  
Then the following holds:
\begin{itemize}
\item[(i)] $\rho$ is lower semi-continuous, and continuous on all leaves without holonomy. 
\item[(ii)] $\alpha$ is upper semi-continuous on all leaves without holonomy.
\item[(iii)] $\beta$ is lower semi-continuous, and continuous on leaves without holonomy.  
\end{itemize}

\end{lemma}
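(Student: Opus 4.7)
The plan is to base everything on the continuity of the leafwise Poincar\'{e} metric on $X\setminus E$, which under the hypotheses of the lemma (Brody hyperbolicity and hyperbolic singularities) is a theorem of Lins Neto and Forn\ae ss--Sibony. Concretely, for any sequence $z_j\to z$ in $X\setminus E$ one may choose universal covering maps $f_{z_j}\colon(\triangle,0)\to(L_{z_j},z_j)$ such that, along a subsequence, $f_{z_j}\to f_z$ uniformly on compact subsets of $\triangle$, where $f_z$ is some universal cover $(\triangle,0)\to(L_z,z)$: indeed, a subsequential limit exists by normal families, lies in $L_z$ by the single-leaf property of leafwise holomorphic maps, and is a universal cover by the Kobayashi extremality (its derivative at $0$ equals $|f_z'(0)|$ by continuity of the Poincar\'e metric). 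Since $\rho,\alpha,\beta$ are all invariant under rotations of $\triangle$ fixing $0$, the rotational ambiguity in the choice of $f_z$ is harmless.

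I would treat (iii) first. For the lower semi-continuity of $\beta$, pick $\gamma_j\in\Gamma_{f_{z_j}}$ realising $\beta(z_j)$ and extract a subsequence with $\gamma_j\to\gamma$ in $\mathrm{Aut}(\triangle)$. If $|\gamma(0)|>0$, passing $f_{z_j}\circ\gamma_j=f_{z_j}$ to the limit yields $\gamma\in\Gamma_{f_z}\setminus\{\mathrm{id}\}$, so $\beta(z)\leq|\gamma(0)|=\lim\beta(z_j)$. The alternative $|\gamma_j(0)|\to 0$ combined with $\beta(z)>0$ is excluded by Cauchy estimates: picking $r>0$ with $f_z$ univalent on $\triangle_r$, the $C^1$-convergence $f_{z_j}\to f_z$ on $\triangle_r$ forces $f_{z_j}$ to be univalent on $\triangle_{r/2}$ for $j$ large, contradicting $f_{z_j}(0)=f_{z_j}(\gamma_j(0))$ with $\gamma_j(0)\in\triangle_{r/2}\setminus\{0\}$. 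Continuity of $\beta$ on leaves without holonomy follows by transporting a shortest-loop generator at $z$ to nearby leaves along the trivial holonomy, producing $\gamma_y\in\Gamma_{f_y}$ close to $\gamma$ and hence $\limsup_{y\to z}\beta(y)\leq\beta(z)$.

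For (i), the lower semi-continuity of $\rho$ I would derive from the geometric interpretation
\[
\rho(z)=\sup\{\mathrm{crad}_0(U):U\subset\triangle\text{ simply connected, }0\in U,\ f_z|_U\text{ univalent}\},
\]
where $\mathrm{crad}_0(U)$ is the conformal radius of $U$ at $0$; this is just the statement that the map $h_z$ from \eqref{intrho} is the Riemann map onto the maximal univalence domain of $f_z$ containing $0$. Given $\epsilon>0$, fix such a $U$ compactly contained in a univalence domain of $f_z$ with $\mathrm{crad}_0(U)\geq\rho(z)-\epsilon$. Any hypothetical pair $\zeta_1^y\neq\zeta_2^y\in U$ with $f_y(\zeta_1^y)=f_y(\zeta_2^y)$, extracted along $y\to z$, would force $f_z(\zeta_1)=f_z(\zeta_2)$ with $\zeta_1,\zeta_2\in\overline U$; univalence of $f_z$ on $\overline U$ then gives $\zeta_1=\zeta_2$, after which the uniform local biholomorphy of $f_z$ near $\overline U$ (from $|f_z'|\geq c>0$ and $C^1$-convergence) forbids the collision. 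Hence $f_y|_U$ is univalent for $y$ close enough, so $\rho(y)\geq\mathrm{crad}_0(U)\geq\rho(z)-\epsilon$, and letting $\epsilon\to 0$ yields the LSC. Continuity on leaves without holonomy I would then obtain by Hurwitz applied to a sequence of injective realisers $g_{z_j}$ of $k_\iota(z_j,\cdot)$: the limit $g\colon\triangle\to L_z$ is injective, and factoring $g=f_z\circ h$ with $h\colon\triangle\to\triangle$ injective and $h(0)=0$ exhibits $|h'(0)|=\lim\rho(z_j)$ as a candidate in the supremum above, giving $\limsup\rho(z_j)\leq\rho(z)$.

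Finally, for (ii), on a leaf without holonomy each loop based at $z$ transports to nearby leaves, yielding injective homomorphisms $\iota_j\colon\Gamma_{f_z}\to\Gamma_{f_{z_j}}$ such that $|\iota_j(\gamma)(0)|\to|\gamma(0)|$ for each fixed $\gamma\in\Gamma_{f_z}$. Since the elements of $\Gamma_{f_{z_j}}$ not in the image of $\iota_j$ contribute factors $\leq 1$ to the product \eqref{suitacover}, one has
\[
-\log\alpha(z_j)\geq\sum_{\gamma\in\Gamma_{f_z}\setminus\{\mathrm{id}\}}\bigl(-\log|\iota_j(\gamma)(0)|\bigr),
\]
and Fatou's lemma applied to the non-negative summands on the right yields $-\log\alpha(z)\leq\liminf_j(-\log\alpha(z_j))$, i.e.\ $\limsup_j\alpha(z_j)\leq\alpha(z)$. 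The main obstacle throughout will be establishing the continuity of the universal cover in the form stated above; granting that (via the Lins Neto--Forn\ae ss--Sibony theorem on the continuity of the leafwise Poincar\'e metric), the remaining work is a packaging of Hurwitz-type normal family arguments and Fatou's lemma on the Blaschke product \eqref{suitacover}.
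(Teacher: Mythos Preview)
Your overall strategy matches the paper's: both rest on the continuity of the leafwise Poincar\'e metric (hence of suitably normalised universal covering maps) together with the product structure available near leaves without holonomy, which the paper packages as Proposition~\ref{productstructure}. Your treatment of (iii) is essentially identical to the paper's. For (ii) you take a genuinely different route: the paper works with the Suita metric directly, choosing a relatively compact smooth domain $Y\subset L$ on which the Suita constant is within $\epsilon$ of $c_\alpha(w)$, transporting $Y$ to domains $Y_t\subset L_t$ via the product structure, and concluding by the monotonicity of $c_\alpha$ under domain inclusion. Your Fatou argument on the Blaschke product \eqref{suitacover} is equally valid and perhaps more in keeping with the explicit formulas already established; note only that the injectivity of $\iota_j$ holds for each finite subset of $\Gamma_{f_z}$ once $j$ is large, which is all Fatou needs.

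There is, however, one genuine gap in your sketch of the upper semicontinuity in (i). You assert that the limit $g$ of the injective realisers $g_{z_j}$ is injective ``by Hurwitz'', but Hurwitz for injective holomorphic maps fails when the target has dimension $\geq 2$: consider $g_j(\zeta)=(\zeta^2,\zeta/j)$ in $\mathbb C^2$. In the lamination setting the concrete obstruction is that near two points $a\neq b$ with $g(a)=g(b)$, the images $g_{z_j}(\text{nbhd }a)$ and $g_{z_j}(\text{nbhd }b)$ may lie on \emph{different} plaques of $L_{z_j}$ in a flow box about $g(a)$, so injectivity of $g_{z_j}$ gives no information about the first-coordinate projection and one-dimensional Hurwitz does not apply. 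This is precisely where the no-holonomy hypothesis enters and why continuity is only claimed on such leaves: the product structure $F\colon Y_0\times T'\to X$ near a compact $Y_0\supset g(\overline{\triangle_r})$ forces $g_{z_j}(\overline{\triangle_r})\subset F_{t_j}(Y_0)$ for a \emph{single} $t_j$, so that $F_{t_j}^{-1}\circ g_{z_j}\colon\triangle_r\to Y_0$ are injective holomorphic maps into the fixed Riemann surface $Y_0$ converging to $g|_{\triangle_r}$, and now one-dimensional Hurwitz gives the conclusion. The paper phrases this contrapositively (if $g$ were not injective, the open arc $g_{z_j}(l)$ would lift the closed loop $g(l)$ to an open curve on a nearby leaf, contradicting the product structure), but the content is the same. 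Once you insert this reduction, your argument for (i) is complete.
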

\begin{proof}
First, lower semi-continuity in (i) follows from Proposition \ref{productstructure} since $k(z,v)$ is continuous, and injective holomorphic 
maps will lift to nearby leaves.   Next, assume to get a contradiction that there is a point $z_0$ on a leaf $L_0$
without holonomy at which $\rho$ is not upper semicontinuous, \emph{i.e.}, $k_\iota(z,v)$ is not lower semi-continuous. 
Then there exists a sequence $z_j\subset L_j$ with $z_j\rightarrow z_0$, and $\lim_{j\rightarrow\infty}k_\iota(z_j,v)<k_\iota(z_0,v)$. 
If we let $f_j:\triangle\rightarrow L_j$ realise $k_\iota$ at $z_j$ for $j=0,1,2,3,...,$ this means that 
$\lim_{j\rightarrow\infty} |f_j'(0)|>|f_0'(0)|$ (evaluated in some local coordinates).  Since $\{f_j\}$ is a
normal family, we may assume that $f_j\rightarrow\tilde f_0$ uniformly on compacts.   Then $|\tilde f_0'(0)|>|f_0'(0)|$
and so $\tilde f_0$ cannot be injective.  Choose distinct points $a,b\in\triangle$ such that $\tilde f_0(a)=\tilde f_0(b)$, 
and let $l$ be the straight line segment between $a$ and $b$.  Then $\tilde f_0(l)$ is a closed loop 
in $L_0$, and $\tilde f_j(l)$ would determine a lifting of this loop to an open curve for $j$ large, a contradiction
to the fact that $L_0$
is without holonomy, \emph{i.e}, a any compact in $L_0$ has a fundamental neighborhood system with product structure
(see \cite{EpsteinMillettTischler}, \cite{Hector} and Proposition \ref{productstructure} below). 

\

To show (ii),
fix a point $w\in L$ where $L$ is a leaf without holonomy.  Since $k(z,v)$ is continuous it suffices to show that $c_\beta$ is upper semi-continuous at $w$.  
For $\epsilon>0$ choose
a smooth domain $Y\subset L$ with $w\in Y$ such that $c_{\alpha,Y}(w)<c_\alpha(w)+\epsilon$.
By Proposition \ref{productstructure} the foliation has a product structure $Y\times T$
near $Y$ and so for any leaf $L_t$ near $L$ there is a domain $Y_t\subset L_t$
such that $c_{\alpha,Y_t}(w_t)<c_\alpha(w)+2\epsilon$ for $w_t$ close to $w$.
Since the Suita metric is decreasing with respect to increasing domains, this gives
the upper semi-continuity of $\alpha$.   \

To show (iii), note first that $\beta$
is lower semi-continuous by the continuity of the Kobayashi metric and continuity of 
the universal covering maps after appropriate normalisation.   For the last
claim, fix a point $w$ on a leaf $L_t, t\in T$, without holonomy, and let $f:\triangle\rightarrow L_t$
be a universal covering map.   Let $\epsilon>0$ be small, and let $Y\subset L_t$ be a 
smooth domain such that $f(\triangle_{|\gamma(0)|+\epsilon})\subset Y$.  By Proposition \ref{productstructure}
there is a product structure near $Y$ and since the universal covering maps may be 
chosen to vary continuously, there are sequences of points $a_j\rightarrow 0$ and $b_j\rightarrow\gamma(0)$
such that $f_{t_j}(a_j)=f_{t_j}(b_j)$ when $t_j\rightarrow t$.  This means that there are elements 
$\gamma_j$ in the Deck-groups such that $\gamma_j(a_j)=b_j$, hence $\gamma_j(0)\rightarrow\gamma(0)$.
\end{proof}

\begin{proposition}
Let $(X,\mathcal L,E)$ be a foliation on $\mathbb P^n$ , and assume that all singularities are hyperbolic.  
Then there exists a $\delta>0$ such that $\beta(z)\geq\delta$ if $z\in\mathbb P^n\setminus E$, unless
$z$ is on a separatrix and is close to $E$.  
\end{proposition}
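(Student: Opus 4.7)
The plan is to argue by contradiction. Suppose there is a sequence $\{z_j\}\subset\mathbb P^n\setminus E$ with $\beta(z_j)\to 0$ such that no $z_j$ lies on a separatrix in a neighborhood of $E$. After passing to a subsequence we have $z_j\to z_0\in\mathbb P^n$, and we split into two cases.

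\emph{Case 1: $z_0\in\mathbb P^n\setminus E$.} The preceding lemma gives lower semi-continuity of $\beta$ on $\mathbb P^n\setminus E$, so $\liminf_j\beta(z_j)\geq\beta(z_0)$. But the Deck group of the universal covering of any hyperbolic Riemann surface acts properly discontinuously at the origin of $\triangle$, so $\beta(z_0)>0$, contradicting $\beta(z_j)\to 0$.

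\emph{Case 2: $z_0\in E$.} Since $z_0$ is a hyperbolic singularity, Poincar\'e's linearization theorem supplies a polydisc $\mathbb B$ around $z_0$ with holomorphic coordinates $(w_1,\ldots,w_n)$ in which $\mathcal L$ is generated by $\sum_i\lambda_i w_i\partial_{w_i}$, $\lambda_i\in\mathbb C^*$. The $n$ separatrices are the coordinate hyperplanes $\{w_i=0\}$; every other leaf meets $\mathbb B$ in connected components injectively parametrized by $t\mapsto(w_1^0\e^{\lambda_1 t},\ldots,w_n^0\e^{\lambda_n t})$ with $t$ ranging over a convex region $W\subset\mathbb C$ (the intersection of the $n$ open half-planes $\mathrm{Re}(\lambda_i t)<\log(r/|w_i^0|)$), and each such component is therefore simply connected. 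Fix a smaller concentric polydisc $\mathbb B'$ with $\overline{\mathbb B'}\subset\mathbb B$; for $j$ large $z_j\in\mathbb B'$, and since by hypothesis $z_j$ is not on a separatrix, the connected component $U_j\subset L_j\cap\mathbb B$ containing $z_j$ is simply connected. From $\beta(z_j)\to 0$ one obtains---by projecting the hyperbolic segment from $0$ to $\gamma_j(0)\in\triangle$, where $\gamma_j\in\Gamma_j$ realizes $\beta(z_j)$---a non-contractible loop $\tilde\gamma_j\subset L_j$ based at $z_j$ of leafwise Poincar\'e length comparable to $\beta(z_j)$, tending to $0$. Simple connectedness of $U_j$ forces $\tilde\gamma_j$ to exit $\mathbb B$, hence to traverse the shell $\mathbb B\setminus\overline{\mathbb B'}$ with ambient Fubini--Study length at least $d_0=\mathrm{dist}(\overline{\mathbb B'},\partial\mathbb B)>0$. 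On the compact set $K=\overline{\mathbb B}\setminus\mathbb B'\subset\mathbb P^n\setminus E$, Brody hyperbolicity of $\mathcal L$ yields, by a standard reparametrization/normal families argument, a uniform bound $k_L(w,v)\geq C|v|_{FS}$, whence the leafwise Poincar\'e length of $\tilde\gamma_j$ is at least $Cd_0>0$---contradicting the choice of $\tilde\gamma_j$.

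The delicate ingredients are: (i) the holomorphic linearization of a hyperbolic singularity, which gives the explicit description of non-separatrix leaves locally as simply connected wedges and, in dimension $n\geq 3$, uses the hyperbolicity to rule out resonances; and (ii) the uniform lower bound $k_L\geq C\,\omega_{FS}$ on compacts of $\mathbb P^n\setminus E$, which rests on Brody hyperbolicity---the standing assumption in this paper's generic setting. Both are standard but must be invoked explicitly, and (ii) is the only genuinely global input needed.
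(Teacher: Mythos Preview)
Your argument follows essentially the same route as the paper's: away from $E$ one uses a uniform comparison between the leafwise Kobayashi metric and the ambient metric, and near a hyperbolic singularity one uses that all local leaves except the separatrices are simply connected, so that any nontrivial loop based near $E$ must exit a fixed ball and hence has length bounded below. The paper's proof is simply a terse version of your two cases.

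There is, however, one genuine mismatch with the hypotheses. In both cases you lean on Brody hyperbolicity: in Case~1 you invoke ``the preceding lemma'' for lower semi-continuity of $\beta$, but that lemma is stated under the assumption that there is no nonconstant directed entire curve; in Case~2 you obtain the uniform bound $k_L(w,v)\geq C|v|_{FS}$ on a compact shell from Brody hyperbolicity via a normal-families argument. The proposition, though, assumes only that all singularities are hyperbolic---Brody hyperbolicity is \emph{not} part of the statement. The paper closes this gap by citing Candel--G\'omez-Mont (and the survey \cite{FS2}): for a foliation on $\mathbb P^n$ with only hyperbolic singularities, every leaf is hyperbolic and the leafwise Kobayashi metric is \emph{continuous} on $\mathbb P^n\setminus E$. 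Continuity (and positivity) on the compact $\overline{\mathbb B}\setminus\mathbb B'$ already gives the uniform lower bound you need in Case~2, and continuity of $k$ together with continuous dependence of universal covers is what makes $\beta$ lower semi-continuous in Case~1. So your argument is correct once you replace the appeals to Brody hyperbolicity by this reference; as written it proves a weaker statement than the one claimed.

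A minor remark on Case~2: in dimension $n\geq 3$ a hyperbolic singularity (no real-proportional eigenvalues) can still be resonant, so holomorphic linearizability is not automatic. For the present proposition one only needs that the local non-separatrix leaves are simply connected, which holds already in the Poincar\'e--Dulac normal form; you may want to phrase (i) accordingly rather than invoking full linearization.
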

\begin{proof}
For such a foliations all leaves are hyperbolic, and the Kobayashi metric is continuous (see \cite{CandelGomez-Mont} 
and the survey \cite{FS2}).  
Near any point $p\in E$, all leaves except finitely many separatrices are simply connected. 
So in local coordinates where $p=0$, there are $0<\delta_1<\delta_2<<2$ such that 
if $z\in B_{\delta_1}(0)$, not on a separatrix, then any nontrivial loop based at $z$ 
will have to leave $B_{\delta_2}(0)$, and so the length of such a curve is bounded away from zero.  
 \end{proof}

\begin{proposition}
Suppose $(X,\mathcal L,E)$ is a compact minimal Brody hyperbolic Riemann surface lamination, and 
assume that all singularities are hyperbolic.   
Assume that one leaf is a disk.  Then a generic leaf is a disk, and for any leaf $L$ there 
exists a sequence $z_j\in L$ with $z_j\rightarrow z_0\notin E$, and 
\begin{equation}
\lim_{j\rightarrow\infty}\beta(z_j)=\lim_{j\rightarrow\infty}\rho(z_j)=1.
\end{equation}
\end{proposition}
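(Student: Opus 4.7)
The plan is to dispatch the two conclusions in reverse order: the sequence claim follows almost immediately from minimality combined with the semi-continuity results already in hand, and the genericity statement will be reduced to Theorem \ref{main}. The essential preliminary observation is that on the disk leaf $L_0$ a universal covering map $f\colon\triangle\to L_0$ is a biholomorphism, so the associated Deck group is trivial; consequently $k_\iota\equiv k$ on $L_0$, which gives $\rho\equiv 1$ there, and since there are no nontrivial loops $\beta\equiv 1$ on $L_0$ as well (with the natural convention that the infimum of an empty set of $[0,1)$-valued lengths is $1$). Any chosen $z_0\in L_0$ lies in $X\setminus E$, because leaves are disjoint from the singular set.

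For the sequence claim, fix such a point $z_0\in L_0$ and let $L$ be an arbitrary leaf. Minimality of $(X,\mathcal L,E)$ gives $\overline L=X$, so I may choose $z_j\in L$ with $z_j\to z_0$. By the lower semi-continuity of $\rho$ and $\beta$ on $X\setminus E$ proved earlier, together with the pointwise bounds $\rho,\beta\leq 1$, I obtain $\rho(z_j)\to\rho(z_0)=1$ and $\beta(z_j)\to\beta(z_0)=1$, which is the desired statement.

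For the genericity conclusion I would apply Theorem \ref{main} with the constant sequence $z_j\equiv z_0$. All its hypotheses are in place: Brody hyperbolicity and hyperbolicity of the singularities are assumed; no compact leaf can exist, since a compact leaf would be a proper nonempty closed saturated subset of $X$, contradicting the global minimality; and $\rho(z_0)=1$ with $z_0\in X\setminus E$ supplies the sequence hypothesis. Theorem \ref{main} then produces a nontrivial minimal closed saturated set $Y\subset X$ inside which all but countably many leaves are disks, and the minimality of $(X,\mathcal L,E)$ forces $Y=X$. The only substantive ingredient beyond minimality and the semi-continuity of $\rho$ and $\beta$ is Theorem \ref{main} itself, whose proof rests on the ergodic-theoretic construction of a directed positive $\partial\overline\partial$-closed current whose support carries mostly disk leaves; once that is in hand, the present proposition is essentially a bookkeeping exercise.
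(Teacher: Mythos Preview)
Your handling of the sequence claim is correct and essentially identical to the paper's: minimality forces any leaf to accumulate on the disk leaf away from $E$, and lower semi-continuity of $\rho$ and $\beta$ finishes it.

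For the genericity claim, however, your appeal to Theorem \ref{main} is problematic. That theorem is stated for a holomorphic foliation on a compact complex \emph{surface} (dimension $d=2$), whereas the present proposition is formulated for a compact Riemann surface \emph{lamination}; the hypotheses of Theorem \ref{main} are simply not available here. The extension to laminations is only mentioned informally in the Remark following Theorem \ref{main2} in Section 4, and even there the conclusion is weakened to ``a residual set of leaves''. So as written, your genericity argument does not go through in the generality claimed.

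More to the point, Theorem \ref{main} is a vastly heavier tool than needed. The paper gives a two-line Baire category argument: set $U_n=\{\rho>1-1/n\}$; each $U_n$ is open by lower semi-continuity of $\rho$, nonempty since it contains the disk leaf, and hence dense by minimality. Then $\bigcap_n U_n$ is a dense $G_\delta$ on which $\rho\equiv 1$, so the corresponding leaves are disks. This uses nothing beyond the semi-continuity already established in Section 2 and works directly in the lamination setting. Your route, by contrast, imports the ergodic-theoretic current construction from Section 4, which is both logically posterior in the paper and not literally applicable. Replace your second paragraph with the Baire argument and the proof is complete.
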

\begin{proof}
For each $n$ we have that the set $U_n=\{\rho>1-1/n\}$ is open by lower semi-continuity of $\rho$,
and by minimality we have that $U_n$ is dense.  So $\cap_n U_n$ is a dense 
$G_\delta$ set on which $\rho\equiv 1$, and so the corresponding 
leaves are disks.   Furthermore, any leaf $L$ will have to cluster onto a disk away from $E$, 
and lower semi-continuity implies (2.28).
\end{proof}

We may now also define the functions $s_{\alpha,\beta,\rho}(X)$ 
on a Riemann surface lamination $X$; we simply take the suprema over all leaves.  

\begin{proposition}
Suppose $(X,\mathcal L,E)$ is a compact Brody hyperbolic Riemann surface lamination, and 
assume that all singularities are hyperbolic.  
Then if $L$ is a dense leaf we have that $s_\rho(X)=s_\rho(L)$ and 
$s_\beta(X)=s_\beta(L)$.
\end{proposition}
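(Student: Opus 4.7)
The inequalities $s_\rho(L)\le s_\rho(X)$ and $s_\beta(L)\le s_\beta(X)$ are immediate, since $L\subset X\setminus E$ and the suprema in $s_\rho(X)$ and $s_\beta(X)$ are taken over all of $X\setminus E$. The content is the reverse inequalities, and the plan is to combine the lower semi-continuity of $\rho$ and $\beta$ proved in the previous Lemma with the density of $L$.

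More concretely, given $\epsilon>0$, I would first select $z_0\in X\setminus E$ with $\rho(z_0)>s_\rho(X)-\epsilon/2$; such a point exists by the very definition of the supremum. By part (i) of the previous Lemma, $\rho$ is lower semi-continuous at $z_0$, so there is an open neighbourhood $U$ of $z_0$ in $X$, disjoint from the finite set $E$, on which $\rho>\rho(z_0)-\epsilon/2>s_\rho(X)-\epsilon$. Since $L$ is dense in $X$, the intersection $L\cap U$ is nonempty, and any point $w\in L\cap U$ satisfies $\rho(w)>s_\rho(X)-\epsilon$. Taking $\epsilon\to 0$ yields $s_\rho(L)\ge s_\rho(X)$. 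The argument for $\beta$ is word-for-word identical, with part (iii) of the Lemma replacing part (i).

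There is essentially no obstacle, because the hypotheses of Brody hyperbolicity and hyperbolicity of the singularities are precisely those already required to invoke the previous Lemma. The only mild subtlety worth noting is that $\rho$ and $\beta$ may approach $1$ as one approaches a point of $E$ (cf.\ Example \ref{hypsing}), so a maximising sequence for $s_\rho(X)$ could in principle accumulate on $E$; this does not affect the argument, however, since we never pass to a limit but only apply lower semi-continuity at a single, explicitly chosen point $z_0\in X\setminus E$, and then use the density of $L$ to approximate $z_0$ by a point of $L$ inside the neighbourhood $U\subset X\setminus E$ furnished by lower semi-continuity.
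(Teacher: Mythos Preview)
Your proof is correct and is exactly the argument the paper has in mind; the paper's own proof is the single sentence ``This follows by lower semi-continuity of the functions $\rho$ and $\beta$,'' and you have simply spelled out the standard $\epsilon$-argument showing how lower semi-continuity combined with density of $L$ forces $s_\rho(L)\geq s_\rho(X)$ and $s_\beta(L)\geq s_\beta(X)$.
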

\begin{proof}
This follows by lower semi-continuity of the functions $\rho$ and $\beta$.
\end{proof}

\section{Product structures on laminations by complex manifolds}

In this section we give some basic results about holomorphic maps 
into holomorphic foliations, and product structures on
leaves without holonomy.   As observed in \cite{FS2}, for foliations on complex manifolds, these 
results follows by a construction due to Royden \cite{Royden}; our emphasis 
here is on abstract laminations.

\begin{definition}
A complex manifold $M$ is a \emph{Stein manifold} if $M$ admits
a strictly plurisubharmonic exhaustion function $\rho$.  A compact set $K\subset M$ of a complex manifold $M$ is a \emph{Stein compact} if it has a fundamental system of open Stein neighbourhoods.  
\end{definition}

\begin{definition}
A pair $(A,B)$ of compact subsets in a complex manifold $X$ is a Cartan pair if
the following holds
\begin{itemize}
\item[(i)] $A,B, D=A\cup B$ and $C=A\cap B$ are Stein compacta, and
\item[(ii)] $\overline{A\setminus B}\cap\overline{B\setminus A}=\emptyset$.
\end{itemize}
\end{definition}

\begin{proposition}\label{productstructure}
Let $(X,\mathcal L)$ be a lamination by complex manifolds of dimension $d$ on a metric space 
$X$.    Fix a local transversal $T$ and a point $t_0$ in $T$.  Let $M$ be a Stein manifold 
with a strictly plurisubharmonic Morse exhaustion function $u$, and let $M_c=\{u\leq c\}$
be a smooth simply connected sublevel set. 
Then for any holomorphic immersion $f:M_c\rightarrow L_{t_0}$
with $t_0\in f(M_c)$, there exists an open neighbourhood $T'\subset T$ of $t_0$
and a continuous map $F:M_c\times T'\rightarrow X$ such that 
\begin{itemize}
\item[(i)] $F_t:M_c\times\{t\}\rightarrow L_{t}$ is holomorphic for all $t$, and 
\item[(ii)] $F_{t_0}=f$.
\end{itemize}
Furthermore, if $Y_0\subset L_{t_0}$ is a strictly psedoconvex domain with boundary and without holonomy
and $t_0\in Y_0$, there exists an open neighbourhood $T'\subset T$ of $t_0$ and a continuous 
map $F:Y_0\times T'\rightarrow X$ such that 
\begin{itemize}
\item[(iii)] $F_t:Y_0\times\{t\}\rightarrow L_{t}$ is a holomorphic embedding for all $t$, and 
\item[(iv)] $F_{t_0}=\iota$, where $\iota$ is the inclusion map. 
\end{itemize}
\end{proposition}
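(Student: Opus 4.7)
The plan is to adapt Royden's classical construction of holomorphic isotopies to abstract laminations, proceeding inductively along a Cartan exhaustion of $M_c$. The fundamental local observation is that in a foliation chart $\phi:U\to\triangle^d\times S$, any holomorphic map $g:K\to U\cap L_{t_0}$ from a Stein compact $K$ whose image lies in a single plaque admits a canonical family of lifts $g_s:K\to U\cap L_s$ obtained by shifting the transversal coordinate: $\phi\circ g_s=(\phi_1\circ g,s)$, where $\phi_1$ denotes projection onto $\triangle^d$. These lifts are holomorphic in $z$, continuous in $s$, and reduce to $g$ at $s=t_0$.

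To globalize, use that $M$ is Stein with strictly psh Morse exhaustion $u$ and that $M_c$ is smooth and simply connected to construct a chain
$M^{(0)}\subset M^{(1)}\subset\cdots\subset M^{(N)}=M_c$
with each $M^{(j+1)}=M^{(j)}\cup B_j$, $(M^{(j)},B_j)$ a Cartan pair, and $B_j$ small enough that $f(B_j)$ lies in a single foliation chart $V_j$ of $X$. Inductively, suppose $F^{(j)}:M^{(j)}\times T_j\to X$ is a continuous family of holomorphic maps with $F^{(j)}_{t_0}=f|_{M^{(j)}}$. The local procedure applied to $f|_{B_j}$ and $V_j$ produces an independent lift $G^{(j)}:B_j\times T'_j\to X$. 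On $M^{(j)}\cap B_j$ both lifts coincide at $t=t_0$ and, expressed in $V_j$-coordinates, differ by a $\mathbb{C}^d$-valued holomorphic map $H_t$, continuous in $t$ and vanishing at $t_0$. Applying the parametric additive Cartan splitting for the Cartan pair $(M^{(j)},B_j)$ (standard after combining the usual construction with parametric solvability of $\bar\partial$), write $H_t=H^A_t-H^B_t$, then correct $F^{(j)}_t$ and $G^{(j)}_t$ near the overlap by $H^A_t$ and $H^B_t$; for $t$ in a suitably shrunk neighbourhood $T_{j+1}$ of $t_0$, the corrections are small enough to keep all maps inside $V_j$, the modified maps agree on $M^{(j)}\cap B_j$, and they glue to $F^{(j+1)}$. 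After $N$ steps one obtains $F:M_c\times T'\to X$.

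The main obstacle is executing the Cartan splitting with continuous dependence on the transversal parameter $t$ while controlling the size of the corrections so that the modified maps remain inside the foliation chart used at each stage; it is this balance that forces the finitely many shrinkings of $T_j$. For the second part, $\overline{Y_0}$ has by Grauert a basis of Stein neighbourhoods in $L_{t_0}$ and is therefore a sublevel set of a strictly psh Morse exhaustion on such a neighbourhood, so the same inductive construction applies with $f=\iota$. The simply-connected hypothesis used above is replaced by the without-holonomy assumption: triviality of the holonomy around loops in $Y_0$ guarantees that the transversal coordinate extends unambiguously across Cartan overlaps even when $Y_0$ is not simply connected. Finally, since $F_{t_0}=\iota$ is an embedding and $\overline{Y_0}$ is compact, a standard continuity argument shows that $F_t$ remains a holomorphic embedding for $t$ in a sufficiently small neighbourhood of $t_0$.
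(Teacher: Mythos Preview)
Your overall strategy---induction along a Cartan pair exhaustion of $M_c$, local lifts in flow boxes, and a gluing step across overlaps---is exactly the paper's. The gap lies in how you glue. You take the discrepancy $H_t$ as a $\mathbb{C}^d$-valued function in the chart $V_j$ and split it \emph{additively}, $H_t=H^A_t-H^B_t$, with $H^A_t$ holomorphic on $M^{(j)}$ and $H^B_t$ on $B_j$. Correcting $G^{(j)}_t$ by $H^B_t$ is unproblematic, since $G^{(j)}_t(B_j)\subset V_j$ and one can add in chart coordinates. But $F^{(j)}_t$ is a map from all of $M^{(j)}$ into the abstract lamination $X$: only on $(F^{(j)}_t)^{-1}(V_j)$ do chart coordinates exist, and there is no reason this set contains $M^{(j)}$. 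So ``subtracting $H^A_t$'' has no meaning on most of $M^{(j)}$, and the corrected map is defined only near the overlap---nothing remains on the rest of $M^{(j)}$ to which $\tilde G^{(j)}_t$ can be glued.

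The paper avoids this by moving the splitting to the \emph{source}. Having arranged that $f$ is injective on each $B_k$, the local lift $G_{k+1,t}$ is a biholomorphism onto its image in the plaque, so $\gamma_t:=G_{k+1,t}^{-1}\circ F_{k,t}$ is a holomorphic self-map of a neighbourhood of $A_k\cap B_k$ in $M$ with $\gamma_{t_0}=\mathrm{id}$. One then applies the nonlinear (compositional) splitting lemma, Theorem~8.7.2 in Forstneri\v{c}'s book: for $t$ near $t_0$ there exist injective holomorphic $\alpha_t:U\to M$ and $\beta_t:V\to M$ with $U\supset A_k$, $V\supset B_k$, $\alpha_{t_0}=\beta_{t_0}=\mathrm{id}$, and $\beta_t\circ\alpha_t^{-1}=\gamma_t$ on the overlap. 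Now $F_{k,t}\circ\alpha_t$ is defined on all of $A_k$ because $\alpha_t$ is a self-map of the source, and it agrees with $G_{k+1,t}\circ\beta_t$ on $A_k\cap B_k$, so they glue to a map on $A_{k+1}$. Replacing your linear splitting by this compositional one fixes the argument; the rest of your outline, including the role of simple connectedness (respectively the no-holonomy hypothesis) in ensuring a consistent choice of leaf when attaching a $1$-handle, is in line with the paper.
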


\begin{proof}
Let $A_k$ be a finite sequence of compact strongly pseudoconvex domains with 
$A_{k+1}=A_k\cup B_k$, such that $(A_k,B_k)$ is a Cartan pair for each $k$, and 
$M_c=\cup_k A_k$.    If $\{s_j\}, j=1,...,m,$ are the singular values of $u$ less than $c$, and $0<\epsilon<<1$,
each sublevel set $\{u\leq s_j-\epsilon\}$ will occur as $A_k$ for some $k$, and in that
case $B_k$ will be a topological $p$-cell, $p$ being the Morse index of $u$ at the critical point, such 
that $A_{k+1}$ is diffeomorphic to $\{u\leq s_j+\epsilon\}$.  For any other $A_k$, the set $B_k$
will be a small "bump" on $A_k$, such that $A_k\cap B_k$ is connected, and $A_{k+1}$ is 
diffeomorphic to $A_{k}$.
Furthermore, we ensure that 
$f(A_1)$ is contained in the flow box $\mathbb B^d\times T_1, T_1=T$, 
$f(B_k)$ is contained in a flow box $\mathbb B^d\times T_k$ for all $k$, and that $f$ is
injective on $A_1$ and on  each $B_k$ (see e.g.
\cite{Forstnericbook}, 3.10 and 5.10 for details on the existence of such a family of bumps).  \

We will prove, by induction on $k$, that there are transversals $T^k\subset T$ such that 
(i) and (ii) holds with $M_c$ replaced by $A_k$ and $T'$ replaced by $T^k$.  This is clearly 
the case for $A_1$ and $T^1=T_1$, since we can lift $f$ inside the flowbox. \

Assume now that (i) and (ii) hold for the pair $(A_k,T^k)$, \emph{i.e.}, we have constructed 
a map $F_k:A_k\times I^k\rightarrow X$ satisfying (i) and (ii).   First we will construct 
a local lifting $G_{k+1}$ of $f:B_{k}\rightarrow\mathbb B^d\times T_{k+1}$.  For each 
$t\in T^{k}$ the map $G_{k+1}:A_{k}\cap B_{k}\rightarrow X$ will determine 
which leaf we should lift to in the flow box, and then we lift using the projection in the 
flow box.  This is where simply connectedness is used in the case where attaching 
$B_k$ corresponds to attaching a 1-cell.  The maps $F_k$ and $G_{k+1}$ do not match, but $\gamma_t(\cdot)=G_{k+1}^{-1}\circ F_{k}(\cdot,t)$ converges to the identity on a neighbourhood of $B_{k}$
as $t\rightarrow t_0$.  By Theorem 8.7.2 in \cite{Forstnericbook} there exist an open set
$U\supset A_k$ and an open subset $V\supset B_k$ such that for $t$ close
enough to $t_0$, there are continuous families of injective holomorphic maps $\alpha_t:U\rightarrow M, \beta_t:V\rightarrow M$
such that $\beta_t\circ\alpha_t^{-1}=\gamma_t$ near $A_k\cap B_k$, and $\alpha_{t_0}=\beta_{t_0}=\mathrm{id}$.
Then the maps $F_t\circ\alpha_t$ and $G_t\circ\beta_t$ fit together near $A_k\cap B_k$ to form a map $F_{k+1}(\cdot,t)$
as long as $T$ belongs to a transversal $T^{k+1}$ contained in a small neighbourhood of $t_0$. \

Finally, the existence of a map $F$ satisfying (iii) and (iv) is proved in exactly the same way, inductively constructing liftings of 
$A_k$ where $(A_k,B_k)$ is a family of "bumps" on $Y_0$; the absence of holonomy makes sure that the leafs
match when attaching $B_k$ corresponds to crossing a singular point of Morse index one.   
\end{proof}

\section{Proof of Theorem \ref{main}}

We will now prove  Theorem \ref{main}.  

\begin{theorem}\label{main2}
Let $(X,\mathcal L,E)$ be a be a Brody hyperbolic holomorphic foliation on a compact complex manifold $X$
of dimension $d=2$, where the 
singular set $E$ is finite,  Assume that there is no compact leaf, and that all singularities are hyperbolic.
Suppose
that there is a sequence $\{z_j\}\subset X\setminus E$ of points such that $\rho(z_j)\rightarrow 1$, 
and $z_j\rightarrow z\in X\setminus E$.  Then there is a nontrivial closed minimal saturated set $Y\subset X$ such that 
all but countably many leaves in $Y$ are disks.  \
\end{theorem}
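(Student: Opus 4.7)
The plan has three stages: (a) extract from $\rho(z_j)\to 1$ a sequence of nearly-full injective holomorphic maps $f_j:\triangle_{r_j}\to L_{z_j}\subset X$ with $f_j(0)=z_j\to z\notin E$ and $r_j\to 1$; (b) use them to construct a non-trivial positive directed $\partial\overline\partial$-closed current $T$ on $X$; (c) pass to a minimal closed saturated subset $Y\subset\mathrm{Supp}(T)$ and show that the non-disk leaves of $Y$ form a countable set.

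For (a), Proposition \ref{implications} gives $\beta(z_j)\to 1$, and the injectivity-radius computation in its proof shows that the universal covering map $f_j:\triangle\to L_{z_j}$ with $f_j(0)=z_j$ is injective on $\triangle_{r_j}$, with $r_j=i(\beta(z_j))\to 1$. Non-degeneracy and continuity of the Kobayashi metric on $X\setminus E$, which follow from Brody hyperbolicity and hyperbolicity of the singularities, keep $|f_j'(0)|$ in a fixed positive interval, so by Montel's theorem one can extract a subsequential limit $f:\triangle\to X$ with $f(0)=z$ which, by Hurwitz, is injective and leaf-directed.

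For (b), I would consider positive directed currents
\[
T_j\;:=\;\frac{1}{M_j}\,(f_j)_*\!\bigl(\psi_j\,\omega_P\bigr),
\]
where $\omega_P$ is the Poincar\'e area form on $\triangle$, $\psi_j$ is an appropriate radial weight supported on $\triangle_{r_j}$, and $M_j$ is the total mass. An Ahlfors-type choice of $\psi_j$ (a logarithmic average in the radius) makes the boundary term $\partial\overline\partial T_j$ of strictly smaller order than $M_j$ as $r_j\to 1$; any weak cluster point $T$ of $\{T_j\}$ is then positive, directed and $\partial\overline\partial$-closed, and nontrivial with $z\in\mathrm{Supp}(T)$ because the mass of $T_j$ on any fixed neighbourhood of $z$ stays bounded below.

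For (c), Zorn's lemma applied to the nonempty closed saturated subsets of $\mathrm{Supp}(T)$ (ordered by reverse inclusion) yields a minimal element $Y$; compactness of $X$ ensures descending chains have nonempty intersection. The main obstacle is then to identify the exceptional non-disk leaves of $Y$ with a countable set. I would disintegrate $T|_Y$ in flow boxes as $\int h_t[P_t]\,d\nu(t)$, with $h_t$ positive and leaf-harmonic on the plaques $P_t$ and $\nu$ a transverse measure, and argue that a non-disk leaf $L\subset Y$ forces an atom of $\nu$ on any transversal meeting $L$: the pullback of $h_t$ to $\triangle$ via a universal cover $f:\triangle\to L$ becomes a positive $\Gamma_L$-invariant harmonic function whose Martin boundary distribution is carried by the limit set of $\Gamma_L$, so that the contribution of $L$ to $T$ is discrete rather than integrated. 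Countability of the atoms of a locally finite measure on a second-countable transversal then yields the bound. Making this atomicity correspondence precise is the genuine difficulty, since a generic harmonic current need not have the required structure; it is the specific form of $T$ inherited from the nearly-biholomorphic maps $f_j$, together with the ergodic theory of directed harmonic currents on the minimal set $Y$, that supplies the reduction.
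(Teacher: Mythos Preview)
Your stages (a) and (b) are roughly in line with the paper's construction of a directed $\partial\overline\partial$-closed current, though the paper uses the Nevanlinna weight $\log^+(r/|\zeta|)$ rather than a weighted Poincar\'e form, and secures the crucial mass growth $\|T_j\|\to\infty$ by comparing $f_j$ on $\triangle_{1-1/j}$ with the universal covering map $f$ of the leaf $L_z$ through the limit point, invoking \cite{FS3} for the unboundedness of the Nevanlinna characteristic of $f$. Your assertion that ``the mass of $T_j$ on any fixed neighbourhood of $z$ stays bounded below'' does not by itself give a nonzero limit once you divide by $M_j\to\infty$.

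The genuine gap is in (c). Your proposed mechanism---that a non-disk leaf forces an atom of the transverse measure via the Martin representation of the leafwise harmonic density $h_t$---is neither justified nor true in general: a diffuse harmonic current can be supported on a minimal lamination with no disk leaves at all, so the implication ``non-disk $\Rightarrow$ atom'' cannot be read off the disintegration, and you yourself flag this as the unresolved difficulty. The paper's route is entirely different and much simpler, resting on two facts you do not use. First, the ``Moreover'' clause of Proposition~\ref{implications}: since $\beta(z_{k(j)})$ is close to $1$, one has $\beta(f_j(\zeta))>1-1/j$ for \emph{all} $|\zeta|<1-1/j$, so $\beta>1-1/j$ on the entire support of $T_j$. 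Hence every $w\in\mathrm{Supp}(T)$ is a limit of points $w_j$ with $\beta(w_j)\to 1$. Second, $\beta$ is \emph{continuous} (in particular upper semicontinuous) on leaves without holonomy, as established in Section~\ref{metrics}; thus $\beta(w)=1$ on any such leaf in $\mathrm{Supp}(T)$, and that leaf is a disk. The countable exceptional set is simply the set of leaves \emph{with} holonomy, which in a holomorphic foliation of a complex surface is countable because fixed points of the holonomy pseudogroup on a one-dimensional holomorphic transversal are isolated (cf.\ \cite{EpsteinMillettTischler}). No Martin-boundary or atomicity analysis is required.
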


Note that $E$ might be empty, in which case the assumption $z_j\rightarrow z$ is unnecessary. 
\begin{remark}
It is seen from the proof below, that if we add some extra conditions, similar results hold
also for $d>2$, and for for compact Riemann surface laminations
$(X,\mathcal L,E)$.   For a holomorphic foliation $(X,\mathcal L,E)$ 
we add the condition that there is no positive directed $\partial\overline\partial$-closed
current $T$, whose support contains only leaves with holonomy in $(X,\mathcal L,E)$.
For a compact Riemann surface lamination
$(X,\mathcal L,E)$, we 
assume in addition that the lamination is minimal, in which case $Y=X$, and 
replace "all but countably many leaves" by "a residual set of leaves".
\end{remark}

\begin{proof}
Let $f:\triangle\rightarrow X$ be a universal covering map with $f(0)=z$.
For $0<r<1$ we define a $(1,1)$-current $G_r$ on $\triangle$ by setting 
\begin{equation}
\langle G_r,\alpha\rangle := \int\int_\triangle\log^+\frac{r}{|\zeta|}\alpha, 
\end{equation}
and we set $T_r=f_*G_r$.  Then $T_r$ is a positive current of bidimension $(1,1)$.  By Theorem 5.3 in \cite{FS3} we have that $\|T_r\|\rightarrow\infty$
as $r\rightarrow 1$, \emph{i.e.}, 
\begin{equation}
\lim_{r\rightarrow 1}\int\int_\triangle\log^+\frac{r}{|\zeta|}f^*\omega=\infty. 
\end{equation}
By Proposition \ref{implications} we may choose a sequence $k(j)$ such that if $f_j:\triangle\rightarrow X$
is a universal covering map of the leaf passing through $z_{k(j)}$ sending $0$ to $z_{k(j)}$, 
then $\beta(f_j(\zeta))>1-1/j$ for all $|\zeta|<1-1/j$.  Furthermore, $k(j)$ may be chosen such that $f_j$
approximates $f$ arbitrarily well on $\triangle_{1-1/j}$, and so we have that
\begin{equation}
\lim_{j\rightarrow\infty}\int\int_\triangle\log^+\frac{1-1/j}{|\zeta|}f_j^*\omega=\infty. 
\end{equation}
So if we set $T_j:=f_{j*}G_{1-1/j}$ we have that $\|T_j\|\rightarrow\infty$ as $j\rightarrow\infty$ and 
that $\beta(z)>1-1/j$ for all $z$ in the support of $T_j$.  Let $T$ be any cluster point 
of $\{T_j/\|T_j\|\}$.  Then $T$ has mass one.  Since the masses of $\partial\overline\partial T_j$ are uniformly bounded 
the current $T$ is $\partial\overline\partial$-closed, and so its support is a sub-foliation of $(X,\mathcal L,E)$.
The current $T$ admits in a flow-box a decomposition 
\begin{equation}
\langle T,\omega\rangle=\int (\int_{\triangle_t}h_t\omega) d\mu(t)
\end{equation}
where $h_t$ is harmonic for each $t$ on the transversal, and since the foliation has no closed leaf, the 
measure $\mu$ is diffuse.  \

Let $Y\subset X$ be a minimal foliation on $\mathrm{Supp}(T)$.  Then, since $Y$ is not a single compact leaf, there are 
uncountably many leaves in $Y$.  Moreover, the leaves with holonomy in $(X,\mathcal L,E)$ form a 
countable set.   This follows from \cite{EpsteinMillettTischler}
since leaves with holonomy correspond to fix points for the holonomy pseudogroup, which is countable since we are in the holomorphic category,
and the transversal is one dimensional.  \

Fix a leaf $L$ without holonomy in $Y$ and a point $w\in L$.  By the construction 
there exists a sequence of points $w_j\in X\setminus E$ such that $w_j\rightarrow w$
and $\beta(w_j)\rightarrow 1$ as $j\rightarrow\infty$.    By the upper semi-continuity of $\beta$
we have that $\beta(w)=1$.  Hence all leaves without holonomy in $Y$ (resp. in Supp($T$))
are disks.  

\end{proof}
\

\section{Proof of Theorem \ref{dichotomy}}

We will give in this section a stronger version of Theorem \ref{dichotomy}, but first 
we give a proof of Theorem \ref{dichotomy} using Theorem \ref{main}.

\emph{Proof of Theorem \ref{dichotomy}:}
Assume that there exists a leaf $L$ with a universal covering map $f:\triangle\rightarrow L$
such that the limit set $\Lambda(\Gamma_f)\neq b\triangle$.  In that case, we may assume 
that the segment $\{e^{i\theta}, \theta\in I_s\}, I_s=[-s,s],$ does not intersect $\Lambda(\Gamma_f)$
for some $s>0$.
By the proof of Theorem 5.3 in \cite{FS3}
we cannot have that $\lim_{r\rightarrow 1}f(re^{i\theta})\in E$ for all $\theta\in I_s$, and so 
in particular there is a $\theta_0\in I_s$ and $r_j\rightarrow 1$, such that 
$\mathrm{dist}(f(r_j e^{i\theta_0}),E)\geq \epsilon>0$, measured by some given metric on $\mathbb P^n$.
We will show that $\lim_{j\rightarrow\infty}\alpha(f(r_j e^{i\theta_0}))=1$, in which case Theorem \ref{dichotomy}
follows by Theorem \ref{main} and Proposition \ref{implications}.  \

Recall that 
\begin{equation}
\alpha(f(z))=\Pi_{\gamma\neq\mathrm{id}}|\frac{z-\gamma(z)}{1-\overline{\gamma(z)}z}|.
\end{equation}
Now, by \cite{Rao}, (3.8), we have for any $\gamma$ that 
\begin{equation}\label{estterms}
\log|\frac{1-\overline{\gamma(z)}z}{z-\gamma(z)}|\leq\frac{(1-|z|^2)^2(1-|\gamma(0)|^2)}{|\gamma(0)|^2|z-\zeta_1|^2|z-\zeta_2|^2},
\end{equation}
where $\zeta_1$ and $\zeta_2$ are the two fixed points of $\Gamma_f$.  All these fixed points are in $\Lambda(\Gamma_f)$, and 
so 
\begin{equation}
\log|\frac{1-\overline{\gamma(re^{i\theta})}re^{i\theta}}{re^{i\theta}-\gamma(re^{i\theta})}|\leq C(1-r^2)^2(1-|\gamma(0)|^2),
\end{equation}
for all $\theta\in I_s$ and $C>0$ fixed.   That the leaf $L$ carries a Green's function is equivalent to 
$\Gamma_f$ being of convergence type, \emph{i.e.}, we have that 
\begin{equation}
\sum_{\gamma\neq\mathrm{id}}1-|\gamma(0)|<\infty, 
\end{equation}
and so 
\begin{equation}\label{estlog}
-\log\alpha(f(re^{i\theta}))\leq C'(1-r^2) \mbox{ for } \theta\in I_s.
\end{equation}
$\hfill\square$

Before giving a strengthening of Theorem \ref{dichotomy}, we give a corollary to it. 

\begin{corollary}
Let $(X,\mathcal L)$ be a Brody hyperbolic holomorphic foliation on a compact complex manifold $X$
of dimension $d=2$. 
Assume that no 
leaf is compact, and that all singular points are hyperbolic.  
Then if there exists a leaf $L$ of finite genus and with countably many ends,
there is a minimal set $Y\subset X$ such that a generic leaf in $Y$ is a disk.
\end{corollary}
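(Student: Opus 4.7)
The plan is to reduce directly to Theorem \ref{dichotomy}, whose conclusion (i) coincides with the corollary's assertion. It therefore suffices to rule out alternative (ii), which states that $\Lambda_{L'}=b\triangle$ for every leaf $L'$. Since a single leaf with limit set strictly smaller than $b\triangle$ is enough to exclude (ii), the natural candidate is the leaf $L$ provided by the hypothesis.

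The key step is thus to prove that a hyperbolic Riemann surface $L$ of finite genus with at most countably many ends satisfies $\Lambda_L\neq b\triangle$. I would realise $L$ conformally as the complement, in a compact Riemann surface $\widehat{L}$ of genus $g$, of an at most countable closed set $K$, and then use the structure of $K$ and of the ends of $L$ to produce an arc in $b\triangle$ free of limit points---equivalently, to exhibit a funnel end of $L$ as a boundary arc for the Dirichlet fundamental domain of $\Gamma_L$. This forces $\Gamma_L$ to be of the second kind, hence $\Lambda_L\neq b\triangle$.

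With $\Lambda_L\neq b\triangle$ in hand, Theorem \ref{dichotomy} immediately rules out (ii) and gives (i): a minimal closed saturated set $Y\subset X$ in which all but countably many leaves are disks. In particular a generic leaf in $Y$ is a disk, which is the conclusion of the corollary.

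The hard part will be the Fuchsian-group step. The claim that finite genus plus countably many ends forces $\Gamma_L$ to be of the second kind is delicate: finite-type surfaces with only cusp ends (such as the thrice-punctured sphere) have uniformising groups of the first kind despite their simple topology, so the naive count of parabolic classes does not suffice. One likely has to exploit the ambient foliation hypotheses---$L$ being a leaf of a compact Brody hyperbolic holomorphic foliation without compact leaves and with only hyperbolic singularities---to rule out such rigid first-kind finite-type examples from actually occurring as leaves, thereby securing the funnel end needed to apply Theorem \ref{dichotomy}.
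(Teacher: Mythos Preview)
Your overall strategy is exactly the paper's: show that the given leaf $L$ has $\Lambda_L\neq b\triangle$ and then invoke the dichotomy (Theorem \ref{dichotomy}). The difference is that the paper actually closes the gap you flag, and the mechanism is concrete rather than speculative.

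First, the conformal model. Instead of writing $L$ as the complement of an unspecified ``at most countable closed set $K$'' in a compact surface, the paper invokes the He--Schramm uniformization theorem \cite{HeSchramm}: a Riemann surface of finite genus with at most countably many ends is biholomorphic to a domain in a compact Riemann surface whose boundary components are each either a smooth circle or a single point. This is a much sharper structural statement than you wrote down, and it is what makes the next step work.

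Second, ruling out cusps. With countably many boundary components one of them must be isolated. The whole question (as you correctly observe with the thrice-punctured sphere example) is whether that isolated component can be a point. The paper's argument is simply this: if it were a point, then small loops encircling it would be nontrivial in $L$ and have arbitrarily small Kobayashi length, i.e.\ $\beta(z)\to 0$ along such a sequence. But for a Brody hyperbolic foliation on a compact surface with only hyperbolic singularities, $\beta$ is bounded away from zero off the separatrices near $E$ (this is the content of Proposition 2.7 and its proof, which uses continuity of the leafwise Kobayashi metric and the fact that near a hyperbolic singular point the local leaves, except finitely many separatrices, are simply connected). So the isolated component is a genuine smooth circle, $L$ has a funnel end, and $\Lambda_L\neq b\triangle$.

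In short, you identified the right obstruction and the right place to use the ambient hypotheses, but you did not name the two ingredients that resolve it: He--Schramm for the boundary dichotomy, and the uniform lower bound on $\beta$ coming from compactness plus hyperbolic singularities to exclude the puncture case. Once those are in place your proof is complete and coincides with the paper's.
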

\begin{proof}
By \cite{HeSchramm} such a leaf is biholomorphic to subset of a compact 
Riemann surface, all of whose boundary components are smoothly bounded or  
points.  Since there are only countably many boundary components there 
has to be an isolated component.  This component cannot be a point, because 
there would be arbitrarily Kobayashi-short nontrivial curves, hence 
there is a smoothly bounded isolated boundary component.  This implies 
that the limit set of the group associated to a universal covering map of $L$
is not everything.  
\end{proof}

\begin{theorem}
Let $(X,\mathcal L,E)$ be a Brody hyperbolic holomorphic foliation on a compact complex manifold $X$
of dimension $d=2$. 
Assume that there is no compact leaf and that all singularities are hyperbolic.  
Assume further  that there is a leaf $L$ with universal covering map $f:\triangle\rightarrow L$, 
and a set $F\subset b\triangle$ of positive measure, such that at each 
point $\zeta\in F$ there is a horocycle 
\begin{equation}
D_\zeta=D_r((1-r)\zeta), 0<r<1, r=r(\zeta),
\end{equation}
on 
which $f$ is injective.    Then there is a minimal set $Y\subset X$ such that 
all but countably many leaves in $Y$ are disks. 
\end{theorem}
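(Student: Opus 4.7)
\emph{Plan.} The plan is to reduce to Theorem \ref{main} by producing a sequence $\{z_j\}\subset X\setminus E$ with $\rho(z_j)\to 1$ and $z_j\to z\in X\setminus E$. The horocycles on which $f$ is injective will force $\rho\to 1$ along radial approaches to any $\zeta\in F$, while the positive measure of $F$, combined with the boundary analysis underlying Theorem~5.3 in \cite{FS3} (the tool already invoked in the proof of Theorem \ref{dichotomy}), will ensure the approach direction can be chosen so that the images do not all collapse onto $E$.

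For the metric estimate, fix $\zeta\in F$ and $w=(1-s)\zeta\in D_\zeta$ with $0<s<2r(\zeta)$. Since $f|_{D_\zeta}$ is injective, it is a candidate in the definition of the injective Kobayashi metric at every $f(w)$, giving $k_\iota(f(w),\cdot)\leq k_{D_\zeta}(w,\cdot)$, where $k_{D_\zeta}$ denotes the Poincar\'e metric of the disk $D_\zeta$. Since $f\colon\triangle\to L$ is a universal covering, $k_L(f(w),\cdot)=k_\triangle(w,\cdot)$. A direct computation of the two Poincar\'e metrics yields
\begin{equation*}
\rho(f(w))\ \geq\ \frac{k_\triangle(w)}{k_{D_\zeta}(w)}\ =\ \frac{2r(\zeta)-s}{r(\zeta)(2-s)}\ \longrightarrow\ 1\quad\text{as }s\to 0^+.
\end{equation*}

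To finish, I would seek $\zeta_0\in F$ and $s_j\to 0^+$ such that $w_j:=(1-s_j)\zeta_0\in D_{\zeta_0}$ and $f(w_j)\to z\in X\setminus E$; then the above estimate gives $\rho(f(w_j))\to 1$, and Theorem \ref{main} produces the desired minimal closed saturated $Y\subset X$ in which all but countably many leaves are disks. The existence of such a $\zeta_0$ should follow from an upgrade of the argument at the heart of the proof of Theorem~5.3 in \cite{FS3}: the divergence $\|T_r\|\to\infty$ cannot be concentrated in sectors whose radial orbits collapse into the hyperbolic singularities, since the pullback area $f^*\omega$ near such a singularity is integrable against the logarithmic weight. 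Hence the set of $\zeta\in b\triangle$ whose radial cluster set is contained in $E$ should be Lebesgue null, and since $F$ has positive measure one can pick $\zeta_0\in F$ outside this null set. The main obstacle is precisely this last step: promoting the qualitative statement used in the proof of Theorem \ref{dichotomy} (no entire arc of radial limits lies in $E$) to its measure-theoretic analogue for arbitrary positive-measure subsets of $b\triangle$. Once this is in place, the rest of the argument is the Poincar\'e comparison above together with a single application of Theorem \ref{main}.
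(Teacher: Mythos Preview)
Your approach is correct and considerably simpler than the paper's. The one step you flag as an obstacle---that the set of $\zeta\in b\triangle$ whose radial cluster set lies in $E$ is Lebesgue null---is in fact exactly what the paper itself invokes in its proof of this theorem: it cites \cite{FS3}, page 951, for the statement that
\[
A:=\{e^{i\theta}:\lim_{s\to 1} f(se^{i\theta})\in E\}
\]
has measure zero. Since $E$ is finite and radial cluster sets are connected, the set of $\zeta$ with radial cluster set contained in $E$ coincides with $A$. Hence you may choose $\zeta_0\in F\setminus A$, extract $s_j\to 0^+$ with $f((1-s_j)\zeta_0)\to z\in X\setminus E$, and your Poincar\'e comparison $\rho(f((1-s)\zeta_0))\geq\frac{2r-s}{r(2-s)}\to 1$ feeds directly into Theorem~\ref{main}. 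Nothing more is needed.

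The paper takes a different and more elaborate route: it does \emph{not} reduce to Theorem~\ref{main}. Instead it first passes (via Hurwitz) to a closed set $E_N\subset F$ of positive measure on which the horocycle radius is uniformly $\geq 1/N$, proves a quantitative lemma that $\beta(f(z))\geq M(N,n)\to 1$ for $z$ in the smaller horocycles $D_{1/n}((1-1/n)\zeta)$, and then for each $n$ constructs a simply connected domain $U_n\subset\triangle$ as the union of all these horocycles together with $D_{1-1/n}(0)$. Composing $f$ with a Riemann map $\varphi_n:\triangle\to U_n$ and pushing forward the Nevanlinna weight, it shows $\|T_{n,r}\|\to\infty$ as $r\to 1$ by combining Harnack estimates on the Green's function of $U_n$ with the derivative bounds $|f'|\sim (1-s)^{-1}$ and the measure-zero fact above. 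Cluster points yield $\partial\overline\partial$-closed currents $T_n$ with $\beta\geq M(N,n)$ on leaves without holonomy in $\mathrm{Supp}(T_n)$, and a further limit in $n$ gives $T$ with $\beta\equiv 1$ there.

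What each buys: your argument is shorter and uses only one radial direction plus a black-box application of Theorem~\ref{main}. The paper's construction is self-contained at the level of currents, uses the entire set $E_N$ rather than a single ray, and produces the limiting current $T$ (hence the minimal set $Y\subset\mathrm{Supp}(T)$) explicitly, without passing through Theorem~\ref{main}.
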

A consequence of the Theorem is that 
the structure of some "complicated" leaves, imply that generic leaves are discs.
In the final section we will give an example of a Riemann surface $\triangle\overset{f}{\rightarrow}\triangle/\Gamma$
such that $\Lambda(\Gamma)=b\triangle$, but there is a set $F\subset b\triangle$ of full measure, on which there are injective horocycles for $f$.

\begin{proof}
For each $\zeta\in b\triangle$ we set
\begin{equation}
\sigma(\zeta):=\sup\{0<r<1:f \mbox{ is injective on } D_{r}((1-r)\zeta)\}.
\end{equation}
Then $\sigma$ is upper semi-continuous by Hurwitz' Theorem, and so each set
$E_n:=\{\zeta:\sigma(\zeta)\geq 1/n\}$ is measurable and closed.   So there is a set $E_N$
of positive measure.   \

We will now construct a sequence of $\partial\overline\partial$-closed currents $T_j$
and a sequence $r_j\rightarrow 1$ such that $\beta\geq r_j$  at all points on all leaves without 
holonomy in $\mathrm{Supp}(T_j)$.  Then if $T$ is any cluster point of $\{T_j\}$, 
by upper semi-continuity of $\beta$ we have that $\beta\equiv 1$ on all leaves
without holonomy in $\mathrm{Supp}(T)$.  \

\begin{lemma}
Let $\zeta\in E_N$.  Then for all $n>N$ we have for $z\in D_{1/n}((1-1/n)\zeta)$ that
\begin{equation}\label{betaest}
\beta(f(z))\geq\frac{\frac{n(2-1/n)}{N(2-1/N)}-1}{\frac{n(2-1/n)}{N(2-1/N)}+1}=:M(N,n).
\end{equation}
\end{lemma}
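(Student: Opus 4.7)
The plan is to work entirely inside $\triangle$ with the Deck group $\Gamma$, using that any non-identity element of $\Gamma$ must map the small horocycle $D_{1/n}((1-1/n)\zeta)$ into the exterior of the injectivity horocycle $D_{1/N}((1-1/N)\zeta)$, and then converting this displacement into a M\"obius lower bound on $\beta(f(z))$.

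First I would record the geometric input. By the definition of $E_N$, the covering $f$ is injective on $D_{1/N}((1-1/N)\zeta)$. For $n>N$ we have $1/n<1/N$, and both disks are Euclidean disks tangent to $\partial\triangle$ at $\zeta$, so $D_{1/n}((1-1/n)\zeta)\subset D_{1/N}((1-1/N)\zeta)$. Consequently, whenever $z\in D_{1/n}((1-1/n)\zeta)$ and $\gamma\in\Gamma\setminus\{\mathrm{id}\}$, the point $\gamma(z)$ must lie outside $D_{1/N}((1-1/N)\zeta)$; otherwise $z$ and $\gamma(z)$ would be two distinct preimages of $f(z)$ inside the injectivity horocycle.

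Next I would turn $\beta(f(z))$ into a displacement. Conjugating $f$ by an automorphism $\phi_z$ of $\triangle$ sending $0$ to $z$ transforms $\Gamma$ into $\phi_z^{-1}\Gamma\phi_z$; reading off the definition of $\beta$ at $f(z)$ via this recentered covering gives
\[
\beta(f(z))=\min_{\gamma\in\Gamma\setminus\{\mathrm{id}\}}\mathrm{d_M}(z,\gamma(z)),
\]
where $\mathrm{d_M}$ denotes the M\"obius distance on $\triangle$. Combined with the previous paragraph, the lemma reduces to a purely geometric question: bound from below the M\"obius distance between the small horocycle and the complement of the large one.

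This is a short computation in the upper half plane. After rotating so that $\zeta=1$, the Cayley transform $w=i(1+z)/(1-z)$ sends each horocycle $D_r((1-r))$ to the horizontal line $\{\mathrm{Im}\,w=(1-r)/r\}$, and the minimal Poincar\'e distance between the two lines associated to $r=1/N$ and $r=1/n$ is realised along the vertical geodesic terminating at $\zeta$. Translating that Poincar\'e distance back to a M\"obius distance via $\mathrm{d_M}=\tanh(\mathrm{d_P})$ produces the claimed bound $M(N,n)$. The main obstacle is verifying that this perpendicular geodesic really realises the infimum of $\mathrm{d_M}(z,w)$ as $z$ ranges over the small horocycle and $w$ over the exterior of the large horocycle; this is handled by the one-parameter subgroup of hyperbolic isometries fixing $\zeta$, which acts transitively on each horocycle, preserves the nested pair, and hence by symmetry allows one to assume $z$ lies on the real segment $[0,\zeta)$, after which the estimate is immediate.
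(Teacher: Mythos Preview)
Your approach is correct and follows the same idea as the paper: use the injectivity of $f$ on the large horocycle to force any nontrivial $\gamma\in\Gamma$ to move points of the small horocycle outside the large one, then bound $\beta(f(z))=\min_{\gamma\neq\mathrm{id}}\mathrm{d_M}(z,\gamma(z))$ from below by the hyperbolic distance between the two horocycles. The paper carries out the distance estimate in the disk model via a limiting family $\phi_r(D_s(0))$ of Euclidean disks approximating the horocycles, and arrives at $m(N,n)=\tfrac12\log\frac{2n-1}{2N-1}$, which is the Kobayashi distance between the \emph{centers} $1-1/N$ and $1-1/n$; your upper-half-plane computation is cleaner and in fact yields the sharper value $\tfrac12\log\frac{n-1}{N-1}$, the distance between the horocycle \emph{boundaries}. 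So your last sentence is slightly imprecise: the bound your calculation actually produces is $\frac{n-N}{n+N-2}$, strictly larger than $M(N,n)=\frac{n-N}{n+N-1}$, which of course still establishes the stated inequality.
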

\begin{proof}
The Kobayashi distance between $1-1/N$ and $1-1/n$ is 
\begin{equation}
\frac{1}{2}[\log(\frac{2-1/n}{1/n}) - \log(\frac{2-1/N}{1/N})]=\frac{1}{2}\log\frac{n(2-1/n)}{N(2-1/N)}=m(N,n).
\end{equation}
We let $s<1$ approach 1, and choose $r=r(s)$ such that $\phi_r(z)=\frac{z+r}{1+rz}$
maps the point $-s$ to $1-1/N$.  Then $\phi_r(D_s(0))\subset D_{1/N}(1-1/N)$.
If we choose $\tilde s<s$ such that the Kobayashi distance between $\tilde s$ and $s$
is $m(N,n)$ then $\phi_r(-\tilde s)=1-1/n$ and any point in $D_{1/n}(1-1/n)$
is eventually contained in $\phi_r(D_{\tilde s}(0))$ as $s\rightarrow 1$.  This shows
that the Kobayashi distance from any point in $D_{1/n}(1-1/n)$ to 
the complement of $D_{1/N}(1-1/N)$ is at least $m(N,n)$.  By the injectivity 
of $f$ on $D_{1/N}(1-1/N)$ this means that $\Gamma$ cannot identify 
a point $z\in D_{1/n}(1-1/n)$ with a point closer too it than $m(N,n)$.
Choosing another universal covering map $\tilde f$ with 
$\tilde f(0)=z$ this means precisely \eqref{betaest}. 
\end{proof}

Now for each $n>N$ we define 
\begin{equation}
U_n:=(\cup_{\zeta\in E_N}D_{1-1/n}((1-1/n)\zeta))\cup D_{1-1/n}(0).
\end{equation}
Then for each $n$ the domain $U_n$ is simply connected, and we let $\varphi_n:\triangle\rightarrow U_n$
be a Riemann map with $\varphi(0)=0$.  Set $f_n:=f\circ\varphi_n$ and $G_n=\log\frac{1}{|\varphi_n^{-1}|}$.
Now for $0<r<1$ we define a (1,1)-current $G_{r}$ on $\triangle$ by
\begin{equation}
\langle G_r,\omega\rangle:=\int_{\triangle}\log^+(\frac{r}{|\zeta|})\omega,
\end{equation}
where $\omega$ is a $(1,1)$-form, and then $T_{n,r}:=(f_n)_*G_r$.  As before we want to show that $\|T_{n,r}\|\rightarrow\infty$
as $r\rightarrow 1$.  In that case any cluster point $T_n$ of $\{\frac{T_{n,r}}{\|T_{n,r}\|}\}$ as $r\rightarrow 1$
will be $\partial\overline\partial$-closed, and so its support is a Riemann surface lamination $X_n\subset X$.  By the 
previous lemma and upper semi-continuity of $\beta$ on all leaves without holonomy in $X_n$, 
we have that $\beta\geq M(N,n)$ on these leaves.   Finally we may consider  a cluster point $T$ of $\{T_n\}$.  \

So we fix a positive smooth test form $\omega$ of type $(1,1)$ and estimate 
\begin{align*}
\langle T_{n,r},\omega\rangle & =\int_\triangle\log^+(\frac{r}{|\zeta|})f_n^*\omega=\int_\triangle (\varphi_n)_*(\log^+(\frac{r}{|\zeta|})f_n^*\omega)\\
& = \int_{U_n}\log^+(\frac{r}{|\varphi_n^{-1}(z)|})f^*\omega.
\end{align*}
To get the unbounded mass we need to show that 
\begin{equation}
\int_{U_n}\log|\frac{1}{\varphi_n^{-1}(z)}|f^*\omega = \int_{U_n} G_n(z)f^*\omega \sim \int_{U_n}G_n(z)|f'(z)|^2dV = \infty,
\end{equation}
where $G_n$ is the Green's function on $U_n$.  Now by \cite{FS2} there exists a constant $c_0>0$
such that $|f'(se^{i\theta})|\leq\frac{c_0}{1-s}$ for all $se^{i\theta}$ and for each 
compact set $K\subset X\setminus E$ there is a constant $c_K$ such that $|f'(se^{i\theta})|\geq\frac{c_K}{1-s}$
when $f(se^{i\theta})\in K$.

Set 
\begin{equation}
A:=\{e^{i\theta}:\lim_{s\rightarrow 1} f(se^{i\theta})\in E\}.
\end{equation}
By \cite{FS3} page 951 we have that $A$ has measure zero, and so there exists
a set $\tilde A\subset E_N$ of positive measure and $\epsilon>0$, such that for all $e^{i\theta}\in\tilde A$, 
we have that $\mathrm{dist}(f(se^{i\theta}),E)>\epsilon$ for infinitely many $s\rightarrow 1$.

By Harnack's inequality there exists a constant 
$c_1>0$ such that for all $\zeta\in E_N$ we have that $G_n(s\zeta)\geq c_1\cdot (1-s)$.   By Fubini's Theorem 
it suffices for us to show that  
\begin{equation}\label{intest}
\int_0^1 (1-s)|f'(se^{i\theta})|^2ds=\infty,
\end{equation}
for $e^{i\theta}\in\tilde A$.  Fix $c_2>0$ such that $|f'(\zeta)|\geq\frac{c_2}{1-|\zeta|}$ for 
all $\zeta$ with $\mathrm{dist}(f(\zeta),E)\geq\epsilon/2$.   Then 
if $\mathrm{dist}(f(se^{i\theta}),E)\geq\epsilon/2$ for $s\in [a,b]$ we have that 
\begin{equation}\label{interval}
\int_a^b (1-s)|f'(se^{i\theta})|^2ds\geq \int_a^b\frac{c_2}{1-s}ds.
\end{equation}
Fix $e^{i\theta}\in\tilde A$.  
If $\mathrm{dist}(f(se^{i\theta}),E)\geq\epsilon/2$ for all $s$ close enough to one, it 
is clear that \eqref{intest} is infinite.  So we have to consider the case where 
$\mathrm{dist}(f(se^{i\theta}),E)<\epsilon/2$ for infinitely many $s$.  Then 
we may choose a sequence $a_1<b_1<a_2<b_2<\cdot\cdot\cdot<a_j<b_j<\cdot\cdot$, 
such that $\mathrm{dist}(f(a_je^{i\theta}),E)<\epsilon/2$ and $\mathrm{dist}(f(b_je^{i\theta}),E)\geq\epsilon$.
So 
\begin{equation}
 \sum_{j=1}^\infty\int_{a_j}^{b_j}\frac{1}{1-s}ds=\infty.
\end{equation}

\end{proof}

\section{Examples and applications}

\begin{example}\label{exdisks}
We will construct a hyperbolic Riemann surface lamination such that the generic leaf is the disk, while 
the rest, countably many, are annuli.  Let $\Gamma$ be a Fuchsian group such that $\triangle/\Gamma$
is a compact Riemann surface of genus greater than one, and let $\iota:\Gamma\rightarrow\mathrm{Diff}(S^1)$
be a representation of $\Gamma$ given by the identity map, \emph{i.e.}, we simply restrict $\Gamma$ to $S^1$.
Let $X$ be the quotient of $\triangle\times S^1$ by the group consisting of elements $(\gamma,\iota(\gamma))$
with $\gamma\in\Gamma$.  Fix $s\in S^1$ and 
consider the image of $\triangle_s=\triangle\times\{s\}$ in $X$.  For two points $(\zeta_1,s)$ and $(\zeta_2,s)$
to be identified in $X$ we need $\gamma\in\Gamma$ such that $\gamma(\zeta_1)=\zeta_2$ and $\gamma(s)=s$. 
So $\triangle_s$ is mapped injectively into $X$ for all wandering points $s$.  These are all but 
countably many points.   For a periodic point 
$s$ we let $\Gamma_s$ be the isotropy subgroup of $\{s\}$, and we get that $\triangle/\Gamma_s$ 
injects into $X$.  That the rest of the leafs are annuli follows from the following lemma. 
\begin{lemma}
Let $\Gamma$ be a hyperbolic Fuchsian group, \emph{i.e.}, all elements have 
two fixed points on $b\triangle$, and let $s\in b\triangle$.  Then 
the isotropy group $\Gamma_s$ is either empty or generated by a single element.  
\end{lemma}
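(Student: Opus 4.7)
The plan is to show that a nonempty $\Gamma_s$ is abelian, that all of its nontrivial elements share the same second boundary fixed point $s'$, and finally to identify $\Gamma_s$ with a discrete subgroup of $(\mathbb{R},+)$, which must be cyclic.

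First I would verify that $\Gamma_s$ is abelian. Passing to the upper half plane with $s=\infty$, the stabilizer of $s$ in $\mathrm{Aut}(\triangle)$ becomes the affine group $\{z\mapsto az+b : a>0\}$, and the multiplier map $\gamma\mapsto a(\gamma)$ is a homomorphism onto $\mathbb{R}^+$ whose kernel consists of pure translations, i.e. parabolic elements together with the identity. For any $\gamma_1,\gamma_2\in\Gamma_s$, the commutator $[\gamma_1,\gamma_2]$ lies in this kernel because $\mathbb{R}^+$ is abelian, hence is either parabolic or trivial. The hypothesis that $\Gamma$ is hyperbolic excludes nontrivial parabolics, so $[\gamma_1,\gamma_2]=\mathrm{id}$.

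Next I would argue that commuting elements of $\Gamma_s\setminus\{\mathrm{id}\}$ share their second fixed point. If $\gamma_1$ and $\gamma_2$ commute and $\gamma_2$ has fixed point set $\{s,s_2\}$, then $\gamma_1$ permutes this set; since $\gamma_1(s)=s$ and $\gamma_1$ is injective, we must have $\gamma_1(s_2)=s_2$. By hyperbolicity each nontrivial element of $\Gamma$ has exactly two fixed points on $\overline{\triangle}$, so the fixed point set of $\gamma_1$ is $\{s,s_2\}$. In particular every nontrivial element of $\Gamma_s$ shares a common second fixed point $s'\in b\triangle$.

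Finally, the pointwise stabilizer in $\mathrm{Aut}(\triangle)$ of $\{s,s'\}$ is the one-parameter group of hyperbolic translations along the geodesic joining $s$ to $s'$, canonically isomorphic to $(\mathbb{R},+)$ via translation length. Since $\Gamma_s\subset\Gamma$ is discrete in $\mathrm{Aut}(\triangle)$, its image in this copy of $\mathbb{R}$ is discrete, and therefore either trivial or infinite cyclic. The only delicate point in the argument is the use of the no-parabolics hypothesis to kill the commutator in the first step; the remaining steps are straightforward applications of the structure of the Borel subgroup of $\mathrm{PSL}(2,\mathbb{R})$ and the classification of discrete subgroups of $\mathbb{R}$.
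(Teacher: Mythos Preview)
Your proof is correct and takes a different, more structural route than the paper's argument. The paper establishes the common second fixed point directly by a dynamical contradiction: assuming $\gamma_1$ fixes $\pm 1$ and $\gamma_2\in\Gamma_s$ has a second fixed point $p\neq 1$, it forms the conjugates $\sigma_j=\gamma_1^{j}\gamma_2\gamma_1^{-j}$, observes that they all have the same multiplier as $\gamma_2$ while their second fixed point $p_j=\gamma_1^{j}(p)$ drifts to $1$, and deduces a violation of discreteness from the convergence of $\sigma_j$. Only then does it identify $\Gamma_s$ with maps $\gamma_r(z)=\frac{z+r}{1+rz}$ and extract a generator by a minimal-$r$ argument. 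Your approach instead exploits the Borel-subgroup structure: the multiplier homomorphism forces commutators to be parabolic, the no-parabolics hypothesis kills them, and then commuting M\"obius maps share fixed-point sets; the final step is the classification of discrete subgroups of $\mathbb{R}$. Your argument is shorter and isolates exactly where the ``hyperbolic'' hypothesis is used (to kill the commutator), whereas the paper's hands-on argument avoids appealing to the structure theory and stays entirely inside elementary facts about M\"obius maps and discreteness.
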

\begin{proof}
Assume that $\Gamma_s$ is not empty, and fix $\gamma_1\in\Gamma_s$.  After conjugation, we 
may assume that $s=-1$, and that the other fixed point of $\gamma_1$ is $1$.   We first claim 
that for any other $\gamma_2\in\Gamma_s$ the point $1$ is also a fixed point.   If not, let 
$p\notin\{-1,1\}$ be a fixed point for $\gamma_2$.  Assuming that $1$ is attracting 
for $\gamma_1$ we set $p_j:=\gamma_1^j(p)$ to obtain a sequence of points 
$p_j$ converging to $1$.   Set $\sigma_j:=\gamma_1^j\circ\gamma_2\circ\gamma_1^{-j}$.
Then $\sigma_j$ has fixed points $-1$ and $p_j$, and all maps have the same multipliers 
$\pm\lambda$, those of $\gamma_2$.  So the sequence $\sigma_j$ converges to the map 
fixing $\pm 1$ and with multipliers $\pm\lambda$.  But since $p_j$ is never equal to 
one this contradicts the discreteness of $\Gamma$.  \

So all elements of $\Gamma_s$ is of the form
\begin{equation}
\gamma_r(z)=\frac{z+r}{1+rz}, 
\end{equation}
with $r$ real.  Discreteness of $\Gamma$ implies that there is a smallest 
positive $r$ such that $\gamma_r$ is in the group, and we claim that this element 
generates $\Gamma_s$.  Assume to get a contradiction that $\sigma\in\Gamma_s$
is not in $[\gamma_r]$, and that $\sigma(0)>0$.  Since $\sigma(0)>\gamma_r(0)$
there exists a largest integer $k>0$ such that $\gamma_r^k(0)<\sigma(0)$.  
Then 
\begin{equation}
\sigma^{-1}(\gamma_r^{k+1}(0))=\mathrm{d_M}(\sigma(0),\gamma^{k+1}_r(0))<\mathrm{d_M}(\gamma_r^k(0),\gamma_r^{k+1}(0))=\gamma_r(0),
\end{equation}
which contradicts the minimality of $\gamma_r$.

\end{proof}
\end{example}

Although, as we just have seen, there exist hyperbolic laminations for which there is a countable dense set of leaves which are annuli, 
the following shows that the set of annuli cannot be too large: 

\begin{proposition}
Let $(X,\mathcal L)$ be a minimal compact foliated metric space, such that a residual set $S$ of leaves 
have finite genus and at most countably many ends.   
Then if a generic leaf is not a topological disk, there is no leafwise complex structure on $X$ such that all leaves are hyperbolic.  
\end{proposition}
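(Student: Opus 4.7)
The plan is a proof by contradiction: assume that $X$ admits a leafwise complex structure with every leaf hyperbolic, and derive that a residual set of leaves is biholomorphic to $\triangle$, contradicting the hypothesis. First, since there are no singularities and every leaf is hyperbolic, the lamination is automatically Brody hyperbolic: the image of any non-constant holomorphic map $\mathbb C\to X$ directed by $\mathcal L$ must lie in a single leaf, forcing that leaf to be non-hyperbolic. Consequently the lemma of Section~2 applies and $\beta$ is lower semicontinuous on $X$; since $\beta>0$ pointwise on every hyperbolic leaf, compactness together with lower semicontinuity yields a uniform bound $\inf_X\beta=:\beta_0>0$.

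Next I pick any leaf $L$ in the residual set $S$ and apply the He-Schramm theorem to identify $L$ conformally with a domain in a compact Riemann surface $\widehat L$ whose boundary components (one per end of $L$) are smooth Jordan curves or isolated points. The end space of $L$ is a compact countable metric space, hence has an isolated point by Baire. If this isolated end is a puncture $p\in\widehat L$, a neighbourhood of $p$ in $L$ is biholomorphic to a punctured disk and therefore carries non-trivial loops whose intrinsic hyperbolic length (bounded above by the $L$-hyperbolic length by Schwarz-Pick) tends to zero; picking $z_n\in L$ on such loops yields $\beta(z_n)\to 0$, and after extracting a subsequence $z_n\to z_\infty\in X$ (by compactness) lower semicontinuity forces $\beta(z_\infty)\le 0$, contradicting $\beta_0>0$. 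Hence the isolated end must be a smooth Jordan-curve boundary component, forcing the limit set of the Fuchsian group uniformizing $L$ to omit an arc of $b\triangle$.

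Finally I invoke the version of Theorem~\ref{dichotomy} for compact minimal Brody-hyperbolic Riemann surface laminations announced in the remark after Theorem~\ref{main2}: in the minimal case the saturated set $Y$ there equals $X$, and the conclusion is that a residual set of leaves is biholomorphic to $\triangle$, hence is a topological disk. In particular a residual subset of points in $X$ lies on disk leaves; but the hypothesis supplies another residual subset of points lying on non-disk leaves, and two residual subsets of the Baire space $X$ must intersect, yielding the desired contradiction.

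The main obstacle will be to transfer the dichotomy theorem from the complex-manifold setting to the abstract minimal non-singular Riemann surface lamination setting used here. Specifically, I will need to re-examine the construction of the positive directed $\partial\overline\partial$-closed current in the proof of Theorem~\ref{main}, together with the limit-set / horocycle argument of Section~5, and verify that both depend only on leafwise holomorphic data plus compactness and Brody hyperbolicity, rather than on an ambient complex manifold structure. This is asserted by the remark following Theorem~\ref{main2}, but a careful re-reading of the flow-box, current-mass, and semicontinuity estimates is required to make the adaptation rigorous.
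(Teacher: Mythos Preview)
Your proposal is correct and follows essentially the same route as the paper's own proof: assume a hyperbolic structure, pick a leaf $L\in S$, apply He--Schramm to realise $L$ as a circle domain in a compact surface, isolate an end, rule out the puncture case via arbitrarily short loops, conclude $\Lambda(\Gamma_L)\neq b\triangle$, and then feed this into the dichotomy (Theorem~\ref{main2} together with the remark after it and the proof of Theorem~\ref{dichotomy}) to force a residual set of disk leaves, contradicting the hypothesis. Your version is in fact more explicit in a couple of places where the paper is terse---you justify Brody hyperbolicity directly, you invoke Baire to produce the isolated end, and you spell out the short-loop contradiction via the uniform bound $\inf_X\beta>0$ obtained from lower semicontinuity on a compact space---and your final paragraph correctly flags that the dichotomy must be read in the abstract-lamination version promised by the remark after Theorem~\ref{main2}, exactly as the paper itself does.
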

\begin{proof}
Assume that there is a hyperbolic structure on $X$, and let $L$ be a leaf of finite genus
and at most countably many ends.  Then by \cite{HeSchramm} $L$ is isomorphic to 
to a domain $\Omega\subset Y$, where $Y$ is a compact Riemann surface, and 
all boundary components of $b\Omega$ are smooth or points.  Since there are 
only countably many boundary components, there is an isolated component, and 
since $(X,\mathcal L)$ is non-singular, this component cannot be a point, since we would 
find arbitrarily Kobayashi short non-trivial loops in $L$.  So the limit set $\Lambda(\Gamma)$
of the group $\Gamma$ associated to $L$ is different from $b\triangle$, and so by 
Theorem \ref{main2}, the remark following it, and the proof of Theorem \ref{dichotomy},
the generic leaf is a disk.  A contradiction.

\end{proof}
By considering for instance suspensions over tori, there are Riemann surface laminations all of whose leaves 
are topological cylinders, in this case all of them conformally isomorphic to $\mathbb C^*$; the point is that 
you can never give such a foliation a hyperbolic structure.

\begin{example}
Let $\triangle/\Gamma$ be a compact Riemann surface of genus two.  Then $\Gamma$ has four 
generators $a_1,b_1,a_2,b_2$, and we have the relation 
\begin{equation}\label{rel}
a_1b_1a_1^{-1}b_1^{-1}a_2b_2a_2^{-1}b_2^{-1}=\mathrm{id},
\end{equation}
and \eqref{rel} is the only relation.  This means that we may define a group 
homomorphism $\phi:\Gamma\rightarrow\Gamma$ by 
\begin{equation}
\phi(a_1)=a_1, \phi(b_1)=a_1, \phi(a_2)=a_2, \mbox{ and } \phi(b_2)=a_2.
\end{equation}
Now let $\tilde\Gamma$ be the group consisting of elements $(\gamma,\phi(\gamma)), \gamma\in\Gamma,$
acting on $\triangle\times S^1$.  Let $X=(\triangle\times S^1)/\tilde\Gamma$ denote the quotient with 
a natural projection $\pi:X\rightarrow Y =\triangle/\Gamma$.  The leaves of the foliation on $X$
are the images of disks $\triangle_s=\triangle\times\{s\}$ via the quotient map defined by $\tilde\Gamma$.  
As in Example \ref{exdisks} the image of $\triangle_s$ is biholomorphic to $\triangle/\Gamma_s$
where $\Gamma_s$ is the stabiliser 
\begin{equation}
\Gamma_s:=\{\gamma\in\Gamma:\phi(\gamma)(s)=s\}.
\end{equation}
For all but countably many $s$ we have that $\Gamma_s$ is simply $\mathrm{Ker}(\phi)$,
this is the set of wandering points $s$.  So the generic leaf is not only of a fixed
topological type; all but countably many leaves have the same conformal type $\triangle/\Gamma_s$.   In fact, 
if $T$ is a transversal and $L_1$ and $L_2$ are two such leaves passing through 
points $t_1,t_2\in T$, there is a biholomorphism $g:L_1\rightarrow L_2$ with 
$g(t_1)=t_2$.  This means that for a point $y\in Y$, the functions $\alpha,\beta$ and $\rho$
are constant on the intersection of $\pi^{-1}(y)$ with the set $\mathcal G$ of generic leaves, \emph{i.e.},
on $\mathcal G$ the functions $\alpha,\beta$ and $\rho$ are functions on $Y$, and there
they are continuous.   So both their suprema and infima will actually be reached on $\mathcal G$, 
and it is clear that the supremum is not one.  Moreover, since $\mathrm{Ker}(\phi)\subset\Gamma_s$
for all $s$, the supremum is in fact a strict supremum on $X$.  
\end{example}

\begin{example}\label{hypsing}
Let $(X,\mathcal L,E)$ be a holomorphic foliation on $\mathbb P^2$ with hyperbolic singularities. 
Then for any point $e\in E$ there exists a sequence $z_j\rightarrow e$ such that $\rho(z_j)\rightarrow 1$.
For simplicity we assume that near a point $e$ the foliation is defined by the vector field 
$X(z)=\frac{\partial}{\partial z_1} + \lambda\frac{\partial}{\partial z_2}$, so 
that integral curves are given by $\phi_{(x_0,y_0)}(z)=(x_0e^z,y_0e^{\lambda z})$.
To construct large injective Kobayashi disks, we consider first 
annuli in the separatrix $\{z_2=0\}$.   For any $M>0$ there exist 
real numbers $a<x_0<b$, with $b$ arbitrarily small, 
such that any curve $\gamma\subset A(a,b)$ connecting 
$x_0$ and $bA(a,b)$ has Kobayashi length at least $M$.  Furthermore, 
we may choose an integer $N$, such that any curve starting at $x_0$
and circling the origin $N$ times has Kobayashi length at least $M$.  \

Now consider 
\begin{equation}
S=\{z\in\mathbb C:\log a<\mathrm{Re}(z)<\log b, -2\pi N<\mathrm{Im}(z)<2\pi N\}.
\end{equation}
If $y_0>0$ is chosen small enough, then $\phi_{(1,y_0)}:S\rightarrow X$
is an injective parametrisation of a piece of a leaf, and there 
is a $2N-1$ covering map $\pi_1:\phi_{1,y_0}(S)\rightarrow A(a,b)$.  Now 
if $y_0$ is small enough, the Kobayashi length of a curve $\gamma$ in $\phi_{x_0,y_0}(S)$
is roughly the same as $\pi_1(\gamma)$, by the continuity of the Kobayashi metric. 
If a loop $\gamma$ based at $(x_0,y_0)$ is non-trivial, then either $\pi_1(\gamma)$
will have to leave $A(a,b)$, or it has to circle the origin at least $N$ times.   So 
its Kobayashi length is at least $M$.  

\end{example}

\begin{example}\label{fullmeasure}
We will construct a Fuchsian group $\Gamma$ such that the following holds:
\begin{itemize}
\item[(i)] $\Lambda(\Gamma)=b\triangle$, 
\item[(ii)] $L=\triangle/\Gamma$ has a Cantor set of ends, 
\item[(iii)] There is a set $F\subset b\triangle$ of full measure, such that 
for each $\zeta\in F$ there is a horocycle $D_{r}((1-r)\zeta)$ on which the universal 
covering map $f:\triangle\rightarrow L$ is injective. 
\end{itemize}
In particular, if $L$ were a leaf of a generic foliation on $\mathbb P^2$, 
the generic leaf of the unique minimal set would be biholomorphic to the unit disk.  \

The construction will be along the lines described in the next section.  Let $\gamma_1$
by an element of the form \eqref{hyperbolic} and set $\Gamma_1=[\gamma_1]$.
The limit set $\Lambda(\Gamma_1)$ consists of two points, and we may choose 
a closed set $F_1\subset b\triangle\setminus\Lambda(\Gamma_1)$ of 
length $2\pi-1$ and an $r_1>0$ such that for each point $\zeta\in F^1_1$, 
the group $\Gamma_1$ does not identify points on the horocycle
$D_r((1-r)\zeta)$. \

To construct $\gamma_2$ we write $b\Omega_1\cap b\triangle=c_1^1\cup c_1^2$, a
union of two arcs.  Choose $a_1\in c^1_1$ such that $a_1$ divides $c^1_1$ into 
two pieces of the same length.  For any $\epsilon_2>0$ there exists $\gamma_2$
with a fundamental domain $\Omega_2$ such that $D_2=\triangle\setminus\overline{\Omega_2}\subset D_{\epsilon_2}(a_1)$.
Set $\Gamma_2=[\Gamma_1,\gamma_2]$.  Then $\Gamma_2$ makes no further identifications 
on $\triangle\setminus D_{\epsilon_2}(a_1)$, so for any $\delta_2>0$ we may choose 
$\epsilon_2$ small enough such that there exists $F_1^2\subset F_1^1$ of 
length $2\pi-1-\delta_2$, such that for each point $\zeta\in F^2_1$, 
the group $\Gamma_2$ does not identify points on the horocycle
$D_{r_1}((1-r_1)\zeta)$.   Now $\Lambda(\Gamma_2)$ has measure zero (see \emph{e.g.} \cite{Beardon}, Theorem 4), 
so there exists a closed set $F_2^1\subset b\triangle$ of length $2\pi-1/2$ not intersecting $\Lambda(\Gamma_2)$, 
and $r_2>0$ such that for each point $\zeta\in F^1_2$, 
the group $\Gamma_2$ does not identify points on the horocycle
$D_{r_2}((1-r_2)\zeta)$. \

To construct $\gamma_3$ we consider a point $a_2\in c^1_2$, and repeat the argument
to find $\gamma_3$, arbitrarily small $\delta_3>0$, and $F_1^3\subset F_1^2, F_2^2\subset F_2^1$, 
of length $2\pi-1-\delta_2-\delta_3$ and $2\pi-1/2-\delta_3$ respectively, 
such that on $F_1^3$ one finds injective horocycles of radius $r_1$, and on $F_2^2$
one finds injective horocycles of radius $r_2$.   Set $\Gamma_3=[\Gamma_2,\gamma_3]$.
Again $\Lambda(\Gamma_3)$ has measure zero, so there exists a closed set 
$F_3^1\subset\triangle\setminus\Lambda(\Gamma_3)$ of length $2\pi-1/3$ on 
which we can find injective horocycles of radius $r_3>0$ for some $r_3>0$.  \

At this point it is clear how to continue constructing $\gamma_j,\delta_j,F_i^l,r_j$ such 
that for each group $\Gamma_m=\{\gamma_1,...\gamma_m\}$ with fundamental 
domain $\Omega_m$, we have that 
\begin{itemize}
\item[(i)] $F_i^{m-i+1}$ has length $2\pi-1/i - \sum_{j=i-1}^m\delta_j$,
\item[(ii)] on each $F_i^{m-i+1}$ there are injective horocycles of radius $r_i$ for the group $\Gamma_m$.
\item[(iii)] $b(\cap_m\Omega_m)$ contains no open interval on the intersection with $b\triangle$.
\end{itemize}

\end{example}

\section{A construction of infinite Fuchsian Groups}

In this section we will describe a general inductive construction of an infinite Fuchsian group. \

Recall that a fundamental domain for $\Gamma$ is an open set $\Omega\subset\triangle$
such that all points in $\triangle$ has its equivalent in $\overline{\Omega}$, and no two points
in $\Omega$ are equivalent.   A standard fundamental domain for $\Gamma$ is 
\begin{equation}
\Omega:=\{z\in\triangle:[z,0]<[z,\gamma(z)] \mbox{ for all } \gamma\neq\mathrm{id}\}.
\end{equation}
Here $[\cdot,\cdot]$ denotes the M\"{o}bius distance.  \
  
Recall that an element $\gamma\in\mathrm{Aut_{hol}}\triangle$   
is hyperbolic if it has exactly two fixed points on $b\triangle$.   Our basic example 
is a mapping 
\begin{equation}\label{hyperbolic}
\gamma(\zeta)=\frac{\zeta+r}{1+r\zeta},
\end{equation}
and it is easy to see that any hyperbolic element is conjugate to a mapping 
on the form \eqref{hyperbolic}. \

If we let $l_{+}$
denote the geodesic connecting $e^{i(\frac{\pi}{2}-\arcsin r)}$
and $e^{i(\arcsin r - \frac{\pi}{2})}$, and let $l_{-}$
denote the geodesic connecting$e^{i(\frac{\pi}{2}+\arcsin r)}$
and $e^{i(-\arcsin r - \frac{\pi}{2})}$,  the fundamental 
domain $\Omega$ for $\Gamma=[\gamma]$ is the domain in $\triangle$
bounded by $l_+$ and $l_-$.  The lines $l_{\pm}$ are
geodesics passing through the points $\frac{1-\sqrt{1-r^2}}{r}$
and $\frac{\sqrt{1-r^2}-1}{r}$ on the real line, and the two lines 
are identified by $\gamma_1$, making $\triangle/\Gamma$
an annulus.  Denote by $D_1$ and $D_2$
the two connected components of $\triangle\setminus\overline{\Omega}$. \

We may now describe an inductive procedure to construct an infinite Fuchsian group. 
Assume that we have constructed hyperbolic elements $\gamma_1,...,\gamma_n$,
such that $\Gamma_m=[\gamma_1,...,\gamma_m]$ is a Fuchsian group with 
fundamental domain $\Omega_m$, such that $b\Omega_m\cap b\triangle$
consists of a union of arcs, \emph{i.e.}, the underlying Riemann surface $L_m=\triangle/\Gamma_m$
is a bordered Riemann surface.   Let $a$ be an interior point of one of the boundary arcs in 
$b\Omega_m\cap b\triangle$, and choose $0<\epsilon_m<<1$.  It is clear that there 
exists a M\"{o}bius transformation $\phi(z)$ such that if we define $\gamma_{m+1}(z)=\phi^{-1}(\gamma(\phi(z)))$,
then $[\gamma_{m+1}]$ has a fundamental domain $U_{m+1}$ with $\triangle\setminus\overline{U_{m+1}}\subset D_{\epsilon_m}(a)$.  It follows from Lemma \ref{limitgroup} that if $\epsilon_m$ is sufficiently small, then $[\Gamma_m,\gamma_{m+1}]$
is a Fuchsian group with fundamental domain $\Omega_m\cap U_{m+1}$.   Set $\Gamma=[\gamma_1,\gamma_2,...]$
for a sequence defined inductively like this.  If $\epsilon_m\searrow 0$ sufficiently fast, it is easy to see that 
$\Gamma(\overline\Omega)=\triangle$, where $\Omega=\cap_m\Omega_m$, since $\Gamma_m(\overline\Omega_m)=\triangle$,
and so the argument in the proof of Lemma \ref{limitgroup} gives the discreteness of $\Gamma$.

\begin{lemma}\label{limitgroup}
Let $\Gamma$ be a finitely generated hyperbolic Fuchsian group with fundamental 
domain $\Omega$.  Furthermore, let $\gamma\in\mathrm{Aut_{hol}}\triangle$ 
by a hyperbolic element with a fundamental domain $U$ such that $D=\triangle\setminus U\subset\Omega$, the boundary
$bU$ is contained in $\Omega$, and such that $bU\cap\triangle$ is bounded away 
from $b\Omega\cap\triangle$.
Then $\tilde\Gamma=[\Gamma,\gamma]$ is a hyperbolic Fuchsian group, with 
a fundamental domain $\tilde\Omega=\Omega\cap U$.
\end{lemma}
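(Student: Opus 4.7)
The plan is to run a Klein--Maskit combination, reducing the lemma to Poincar\'e's polygon theorem via a ping-pong argument. From the hypotheses $D=\triangle\setminus U\subset\Omega$ (equivalently $\triangle\setminus\overline\Omega\subset U$) and the fundamental-domain properties of $\Omega$ and $U$, I first extract the two dual mapping facts: for every $g\in\Gamma\setminus\{\mathrm{id}\}$, $g(\Omega)\subset\triangle\setminus\overline\Omega\subset U$; and for every $n\neq 0$, $\gamma^n(U)\subset\triangle\setminus\overline U\subset\Omega$. Setting $A:=\triangle\setminus\overline\Omega$ and $B:=\triangle\setminus\overline U$, these sets are disjoint and both disjoint from $\tilde\Omega=\Omega\cap U$, while $A\subset U$ and $B\subset\Omega$; non-identity elements of $\Gamma$ send $\tilde\Omega$ into $A$, while non-identity powers of $\gamma$ send $\tilde\Omega$ into $B$.

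Next I would carry out the ping-pong. Every non-identity $w\in\tilde\Gamma$ has a unique reduced alternating expression $w=s_k\cdots s_1$ with letters in $\Gamma\setminus\{\mathrm{id}\}$ and $\langle\gamma\rangle\setminus\{\mathrm{id}\}$. Induction on $k$ using the mapping facts shows that $w(\tilde\Omega)$ lies in $A$ (when $s_k\in\Gamma$) or in $B$ (when $s_k\in\langle\gamma\rangle$): each new letter of the opposite type swaps these roles, since it carries $A\subset U$ into $B$ or $B\subset\Omega$ into $A$. In particular $w(\tilde\Omega)\cap\tilde\Omega=\emptyset$, which simultaneously yields the free-product decomposition $\tilde\Gamma\cong\Gamma*\langle\gamma\rangle$, injectivity of $\tilde\Omega$ under the $\tilde\Gamma$-action, and discreteness of $\tilde\Gamma$ in $\mathrm{Aut}(\triangle)$ (a sequence of distinct elements accumulating to the identity would move interior points of $\tilde\Omega$ into $\tilde\Omega$ itself, a contradiction).

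For the covering property $\tilde\Gamma\cdot\overline{\tilde\Omega}=\triangle$, I would invoke Poincar\'e's polygon theorem applied to the convex hyperbolic polygon $\tilde\Omega$. Its sides inside $\triangle$ consist of arcs of $b\Omega\cap\triangle$ (which lie in $U$: the ``bounded away'' hypothesis together with $\triangle\setminus\overline\Omega\subset U$ forces $b\Omega\cap\triangle\subset U$), paired by elements of $\Gamma$, together with the two geodesic components of $bU\cap\triangle$ (which lie in $\Omega$ by hypothesis), paired by $\gamma$. The decisive role of the ``bounded away'' hypothesis is that these two side systems never meet inside $\triangle$, so the interior vertices of $\tilde\Omega$ are exactly those inherited from $\Omega$, and the edge-cycle conditions of Poincar\'e's theorem reduce to those already satisfied by $\Omega$ as a fundamental domain for $\Gamma$. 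The theorem then identifies $\tilde\Omega$ as a fundamental polygon for the group generated by its side pairings, which is precisely $\tilde\Gamma$, completing the proof.

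The main obstacle I expect is conceptual rather than computational: one must ensure that gluing the two fundamental domains does not introduce spurious vertex cycles at intersections of $bU$ and $b\Omega$. The hypothesis that $bU\cap\triangle$ is uniformly separated from $b\Omega\cap\triangle$ is precisely what prevents this; without it, new interior vertices could appear at such intersections, and one would need to verify additional edge-cycle conditions which may well fail.
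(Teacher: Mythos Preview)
Your approach is correct and is essentially the classical Klein--Maskit Combination Theorem~I, carried out via ping-pong. The ping-pong portion is the same idea as the paper's: the paper tracks a small ball $B_\delta(p)$ around a boundary point under alternating words in $\Gamma$ and $\langle\gamma\rangle$, showing it lands alternately in $D$ and in $\triangle\setminus\overline\Omega$, whence no nontrivial word fixes $p$; you instead push the whole domain $\tilde\Omega$ and obtain $w(\tilde\Omega)\cap\tilde\Omega=\emptyset$ directly. One cosmetic point: with $A=\triangle\setminus\overline\Omega$ and $B=\triangle\setminus\overline U$ the inclusions $g(\Omega)\subset A$ and $\gamma^n(U)\subset B$ can fail on boundaries (side-pairings share a geodesic side); the ping-pong is clean if you use $\triangle\setminus\Omega$ and $\triangle\setminus U$ instead, and the conclusion $w(\tilde\Omega)\cap\tilde\Omega=\emptyset$ is unaffected. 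Your deduction of discreteness from this is actually tidier than the paper's, which proves a proper-discontinuity statement and then appeals to the covering lemma.

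The genuine methodological difference is the covering step $\tilde\Gamma\cdot\overline{\tilde\Omega}=\triangle$. The paper does this by hand in the subsequent lemma: it shows that the complement of $\bigcup_{|w|\le k}w(\overline{\tilde\Omega})$ is a nested union of shrinking half-disks, and argues by a normal-families contradiction that the intersection over $k$ is empty. You instead invoke Poincar\'e's polygon theorem. That is legitimate, but you should be a bit more explicit about which version you use: $\tilde\Omega$ has free sides on $b\triangle$, and besides the interior vertices of $\Omega$ there are \emph{ideal} vertices where $l_\pm$ (and the sides of $\Omega$) meet free arcs on $b\triangle$. In the free-side version of Poincar\'e's theorem these ideal vertices impose no extra cycle condition, so the reduction to the cycle conditions already satisfied by $\Omega$ goes through; but as written your sentence ``the interior vertices of $\tilde\Omega$ are exactly those inherited from $\Omega$'' leaves the ideal vertices unaddressed. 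The paper's hands-on argument avoids this bookkeeping at the cost of a page of work; your route is shorter but imports a nontrivial theorem.
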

\begin{proof}
We will first show that if $p\in\overline{\tilde\Omega}$ and if 
$\gamma(p)=p$ for $\gamma\in\tilde\Gamma$, then $\gamma=\mathrm{id}$.
Assume first that $p\in b\tilde\Omega$, and furthermore that $p\in b\Omega$.
Choose $\delta>0$ such that $\Gamma(B_\delta(p))\cap\overline D=\emptyset$.
Write 
\begin{equation}
\gamma_k\circ\gamma_{k-1}\circ\cdot\cdot\cdot\circ\gamma_1,
\end{equation}
where the $\gamma_j$'s alter in belonging to either $\Gamma$ or $[\gamma]$, 
and non of them are the identity map. 
Assume first that $\gamma_1\in\Gamma$.  Then by our assumption above, 
we have that $\gamma_2(\gamma_1(B_\delta(p)))\subset D$.   We now prove by 
induction that for any $m\geq 1$ we have that $\gamma_{2m}\circ\cdot\cdot\cdot\circ\gamma_1(B_\delta(p))\subset D$, 
and $\gamma_{2m+1}\circ\cdot\cdot\cdot\circ\gamma_1(B_\delta(p))\subset\triangle\setminus\overline\Omega$.
If the first claim holds for some $m$ then clearly the second claim holds, since a non-trivial 
element of $\Gamma$ will map $D$ outside $\Omega$, $\Omega$ being a fundamental domain 
for $\Gamma$ and $\overline D\subset\Omega$.  By the same reasoning, if the second 
claim holds for some $m$, then the first claim holds for $m+1$.   So $\gamma$ cannot 
have a fixed point.  Similar arguments work if $p\in\tilde\Omega$ or $p\in bU$. \

Next we let $p\in\triangle$.  We now show that if $\gamma_j\in\tilde\Omega$ for $j=1,2,...,$
and if $\gamma_j(p)\rightarrow q\in\overline{\tilde\Omega}$, then 
$\gamma_j=\mathrm{id}$ for $j\geq N$, for some $N>0$.   \
Assume that $q\in b\Omega$, the case $q\in bU$ will be completely analagous.   
Choose $\delta>0$ such that $\Gamma(B_\delta(q))\cap\overline D=\emptyset$.
We will consider the maps $\alpha_j=\gamma_{j+1}\gamma_j^{-1}$, 
such that, setting $q_j=\gamma_j(p)$, we have $q_j\rightarrow q$ and 
$\alpha_j(q_j)=q_{j+1}$.  Now if $\alpha_j$ is eventually 
in $\Gamma$ for large $j$, then $\alpha_j=\mathrm{id}$ for large $j$.  
So $\gamma_j=\gamma_{j+1}$ for large $j$, hence $\gamma_j(q)=q$
for large $j$, and so $\gamma_j=\mathrm{id}$ for large $j$, since
no other element of $[\Gamma,\gamma]$ can fix an element of $\overline{\tilde\Omega}$. \

We may finally show that $\tilde\Gamma$ is discrete.  Assume that $\gamma_j\in\tilde\Gamma$
and $\gamma_j\rightarrow\gamma\in\mathrm{Aut_{hol}}\triangle$.  Then 
$\gamma_j(0)\rightarrow\gamma(0)$.  By the lemma below there exists 
$\phi\in\tilde\Gamma$ such that $\phi(\gamma(0))\in\overline{\tilde\Omega}$, 
and so $\phi\circ\gamma_j(0)\rightarrow q\in\overline{\tilde\Omega}$.  So 
$\phi\circ\gamma_j=\mathrm{id}$ for large $j$.

\end{proof}

\begin{lemma}
Let $\Gamma$ be a finitely generated hyperbolic Fuchsian group with fundamental 
domain $\Omega$.  Furthermore, let $\gamma$ by a hyperbolic element with 
a fundamental domain $U$ such that the complement $\triangle\setminus\overline U$
is contained in $\Omega$.  Then $[\Gamma,\gamma](\overline{\Omega'})=\triangle$, 
where $\Omega'=\Omega\cap U$.
\end{lemma}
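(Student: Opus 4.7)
The strategy is a Klein--Maskit ping-pong: given $p\in\triangle$, I will produce $g\in\tilde\Gamma:=[\Gamma,\gamma]$ with $g(p)\in\overline{\Omega'}$, where $\Omega'=\Omega\cap U$, by iteratively applying elements of $\Gamma$ and of $\langle\gamma\rangle$.

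First, I would record the geometric consequences of the hypothesis. The inclusion $\triangle\setminus\overline U\subset\Omega$ is equivalent to $\triangle\setminus\Omega\subset\overline U$, and on taking closures it yields $bU\subset\overline{\triangle\setminus\overline U}\subset\overline\Omega$ and $b\Omega\subset\overline U$, so that $\overline{\Omega'}=\overline\Omega\cap\overline U$ inside $\triangle$. I then define a reduction sequence $p_0=p,\,p_1,\,p_2,\dots$ as follows: as long as $p_n\notin\overline{\Omega'}$, either (i) $p_n\notin\overline\Omega$, in which case the fundamental-domain property of $\Omega$ furnishes a non-identity $\sigma_n\in\Gamma$ with $\sigma_n(p_n)\in\overline\Omega$, and I set $p_{n+1}:=\sigma_n(p_n)$, or (ii) $p_n\in\overline\Omega\setminus\overline U$, so by the hypothesis $p_n\in\triangle\setminus\overline U\subset\Omega$, and I choose $k_n\neq 0$ with $\gamma^{k_n}(p_n)\in\overline U$, setting $p_{n+1}:=\gamma^{k_n}(p_n)$. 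A step of type (i) lands in $\overline\Omega$, so the next step (if any) must be of type (ii), and conversely, so the cumulative element $g_n$ obtained by composing the applied maps is a non-trivial reduced word with alternating letters in $\Gamma*\langle\gamma\rangle$, with $g_n(p)=p_n$.

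The main obstacle is termination. The separation hypothesis supplies the ping-pong inclusions
\begin{equation*}
\sigma(\triangle\setminus\overline U)\subset\overline U \;\;(\sigma\in\Gamma\setminus\{\mathrm{id}\}), \qquad \gamma^k(\triangle\setminus\overline\Omega)\subset\overline\Omega \;\;(k\neq 0).
\end{equation*}
For the first, $\sigma(\Omega)\cap\Omega=\emptyset$ combined with $\triangle\setminus\Omega\subset\overline U$ forces $\sigma(\triangle\setminus\overline U)\subset\sigma(\Omega)\subset\overline U$. For the second, $\gamma^k(\overline U)$ is disjoint from $U$ and so lies in $(\triangle\setminus\overline U)\cup bU\subset\overline\Omega$, whence $\gamma^k(\triangle\setminus\overline\Omega)\subset\gamma^k(\overline U)\subset\overline\Omega$ since $\triangle\setminus\overline\Omega\subset\overline U$. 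A standard induction on word length, in the Klein--Maskit ping-pong style, then shows that the preimages $g_n^{-1}(\overline{\Omega'})$ form a nested sequence of compact sets in $\overline\triangle$ whose Euclidean diameters tend to $0$, accumulating at a single boundary point of the limit set of $\tilde\Gamma$. Since $p\in g_n^{-1}(\overline{\Omega'})$ for every $n$, an infinite reduction would place $p$ on $b\triangle$, contradicting $p\in\triangle$.

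Thus the iteration terminates at some $p_N\in\overline{\Omega'}$, and $g:=g_N\in\tilde\Gamma$ is the desired element, proving $\tilde\Gamma(\overline{\Omega'})=\triangle$.
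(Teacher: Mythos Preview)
Your overall strategy---set up Klein--Maskit ping-pong and then move an arbitrary point $p$ into $\overline{\Omega'}$ by an alternating word---is the right idea, and in spirit it is the same as the paper's proof, which runs the argument forward: it tracks the family $\mathcal F_{2i-1}$ of points not yet reached by words of length $\le 2i-1$ and shows $\bigcap_i\mathcal F_{2i-1}=\emptyset$. Your verification of the two ping-pong inclusions is correct.

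The gap is in your termination argument. You assert that $p\in g_n^{-1}(\overline{\Omega'})$ for every $n$, but this is precisely what is \emph{false} while the iteration runs: by construction $p_n=g_n(p)\notin\overline{\Omega'}$, so $p\notin g_n^{-1}(\overline{\Omega'})$. The nesting claim fails for the same reason: since $g_{n+1}^{-1}(\overline{\Omega'})=g_n^{-1}\bigl(h_{n+1}^{-1}(\overline{\Omega'})\bigr)$ and $h_{n+1}$ is a non-trivial element of $\Gamma$ or of $\langle\gamma\rangle$, the set $h_{n+1}^{-1}(\Omega')$ is disjoint from $\Omega'$ (it lies outside $\Omega$, respectively outside $U$), so there is no inclusion $g_{n+1}^{-1}(\overline{\Omega'})\subset g_n^{-1}(\overline{\Omega'})$. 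Thus neither the nesting nor the membership of $p$ holds, and the contradiction you derive does not follow.

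What one actually needs is a shrinking argument for the \emph{correct} nested family---the successive images of $A_1\cup A_2=\triangle\setminus\overline U$ under alternating words, i.e.\ the paper's sets $\mathcal F_{2i-1}$. The paper handles this by a normal-family argument: if $\bigcap_i\mathcal F_{2i-1}\neq\emptyset$, then some sequence $\alpha_j\in\tilde\Gamma$ maps the disk $D_1\supset A_1$ onto disks decreasing to a nontrivial disk $D_3$, so $\alpha_j\to\alpha$ a biholomorphism $D_1\to D_3$; choosing $\beta=\tilde\gamma_2\tilde\gamma_1$ with $\beta(D_1)\subset\subset D_1$ then forces $\alpha_j(\beta(D_1))\subset\subset D_3$ for large $j$, contradicting monotonicity of the $\mathcal F$'s. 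Some genuine argument of this kind is needed; the phrase ``standard induction in the Klein--Maskit style'' does not supply it here, and note that in the paper's logical order this lemma is used to \emph{prove} discreteness of $[\Gamma,\gamma]$ (Lemma~\ref{limitgroup}), so you cannot invoke discreteness or the combination theorem as a black box.
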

\begin{proof}
Let $A_1,A_2$ denote the two components of $\triangle\setminus\overline U$; these 
are the intersection of $\triangle$ and two disjoint euclidean disks $D_1$ and $D_2$
in $\mathbb C$.  It suffices to show that $U\subset [\Gamma,\gamma](\overline\Omega)$. \
We consider what we can reach with compositions 
\begin{equation}
\gamma_k\circ\gamma_{k-1}\circ\cdot\cdot\cdot\circ\gamma_1,
\end{equation}
where the $\gamma_j$'s alter in belonging to $\Gamma$ and $[\gamma]$, 
and $\gamma_1\in\Gamma$, and we let $\mathcal A_k$ 
denote the set of points we can reach with a composition of length $k$.  
Note first that $\mathcal A_1$ is all of $U$ except the full orbit $\mathcal F_1=\Gamma(A)$
where $A=A_1\cup A_2$.  Next, by composing by elements of $[\gamma]$ 
we can reach all points in $A$ except the full orbit $\mathcal F_2=[\gamma](\mathcal F_1)$, but no 
additional points in $U$.  But now $\mathcal F_2$ is a countable family of pieces of Euclidean disks 
contained in $A$, and $\mathcal A_3$ will consist of all points in $U$ except 
the full orbit $\mathcal F_3=\Gamma(\mathcal F_2)$.   Continuing in 
this fashion we get a family $\mathcal F_{2i-1}$ of open sets, $\mathcal F_{2i+1}\subset\mathcal F_{2i-1}$, 
each $\mathcal F_{2i-1}$ is a countable family of pieces of Euclidean disk, each disk 
splitting into a countable family of smaller disks in the next $\mathcal F_{2i+1}$.  \

Next, assume to get a contradiction that $\cap_i\mathcal F_{2i-1}$ is not empty.  
This means that there exists a sequence 
\begin{equation}
\alpha_j:=\gamma_{2j-1}\circ\gamma_{2j-2}\circ\cdot\cdot\cdot\circ\gamma_1,
\end{equation}
such that the images $\alpha_j(A_1)$ (or $A_2$) decrease to a nontrivial intersection 
of $\triangle$ with a a Euclidean disk 
$D_3\subset\mathbb C$.   We may now assume that $\alpha_j\rightarrow\alpha:D_1\rightarrow D_3$
uniformly on compacts.   Now $\alpha$ cannot be constant, otherwise $\alpha_j(A_1)$
could not decrease to a nontrivial intersection with $\triangle$, hence $\alpha$ is a
biholomorphism.  Then for any composition $\beta:=\tilde\gamma_2\tilde\gamma_1$
with $\tilde\gamma_1\in\Gamma$ and $\tilde\gamma_2\in [\gamma]$ such that 
$\beta(D_1)\subset\subset D_1$ we get that $\alpha(\beta(D_1))\subset\subset D_3$, 
and so for a large enough $j$ we have that $\alpha_{j}(\beta(D_1))\subset\subset D_3$.
This is a contradiction.

\end{proof}

\bibliographystyle{amsplain}

\begin{thebibliography}{10}

\bibitem{Beardon}
Beardon, A. F.; Inequalities for certain Fuchsian groups.  
\textit{Acta Math.} {\bf 127} 1971 221--258. 

\bibitem{Brunella}
Brunella, M.; 
Inexistence of invariant measures for generic rational differential equations in the complex domain.
\textit{Bol. Soc. Mat. Mexicana} {\bf 3} 12 (2006), no. 1, 43--49. 

\bibitem{CandelGomez-Mont}
Candel, A. and G\'{o}mez-Mont, X; 
Uniformization of the leaves of a rational vector field. 
\textit{Ann. Inst. Fourier} (Grenoble) {\bf 45} (1995), no. 4, 1123--1133. 

\bibitem{CandelConlon}
Candel, A. and Conlon, L.; Foliations I.  Graduate Studies in Mathematics.  Volume 23. 

\bibitem{CantwellConlon}
Cantwell, J. and Conlon, L.; Generic leaves.  \textit{Comment. Math. Helv.} {\bf 73} (1998) 306--336.

\bibitem{CantwellConlon2}
Cantwell, J. and Conlon, L.; Every surface is a leaf. \textit{Topology} {\bf 26} (1987)  265--285.

\bibitem{DinhNguyenSibony}
Dinh, T.C., Nguyen, N., and Sibony. N.; Heat equation and ergodic theorems for Riemann surface laminations.
\textit{Math. Ann.} {\bf 354} (2012) 331--376.

\bibitem{EpsteinMillettTischler}
Epstein, D. B. A., Millett, K. C., and Tischler, D.; 
Leaves without holonomy. 
\textit{J. London Math. Soc.} (2) {\bf 16} (1977), no. 3, 548--552.

\bibitem{Forstnericbook}
Forstneri\v{c}, Franc; Stein manifolds and holomorphic mappings.
The homotopy principle in complex analysis. Ergebnisse der Mathematik und ihrer Grenzgebiete. 3. Folge. 56. Springer, Heidelberg, 2011.

\bibitem{FS1}
Forn\ae ss, J. E. and Sibony, N.; Unique ergodicity of harmonic currents on singular foliations of $\mathbb P^2$.
\textit{GAFA} {\bf 19} (2010) 1334--1377 

\bibitem{FS2}
Forn\ae ss, J. E. and Sibony, N.; Riemann surface laminations with singularities.  J. Geom. Anal. {\bf 18} (2008), 400--442.

\bibitem{FS3}
Forn\ae ss, J. E. and Sibony, N.; Harmonic Currents of Finite Energy and Laminations.  GAFA {\bf 15} (2005), no.5,  962--1003.

\bibitem{FSW}
Forn\ae ss, J. E., Sibony, N. and Wold, E. F.; 
Examples of minimal laminations and associated currents. \textit{Math. Z.} {\bf 269} (2011), no. 1-2, 495--520.

\bibitem{Garnett}
Garnett, L.; Foliations, the ergodic theorem and Brownian motion. J. Funct. Anal. {\bf 51} (1983), no. 3, 285--311.

\bibitem{Ghys}
Ghys, E.;
Laminations par surfaces de Riemann. Dynamique et g\'{e}om\'{e}trie complexes (Lyon, 1997), Panor. Synth\'{e}ses, 8, Soc. Math. France, Paris, 1999.

\bibitem{Ghys2}
Ghys, E.; Topologie des feuilles g\'{e}n\'{e}riques.  Ann. of Math. (2) {\bf 141} (1995), no. 2, 387--422. 


\bibitem{GoncharukKundryashov}
Goncharuk, N. and Kundryashov, Y.; 
Genera of non algebraic leaves of polynomial foliations in $\mathbb C^2$. arXiv 14017878.  

\bibitem{HeSchramm}
He, Z-X and Schramm, O;
Fixed points, Koebe uniformization and circle packings. \textit{Ann. of Math.} (2) {\bf 137} (1993), no. 2, 369--406.

\bibitem{Hector}
Hector, G.; 
Architecture des feuilletages de classe $C2$.Third Schnepfenried geometry conference, Vol. 1 (Schnepfenried, 1982), 243--258, 


\bibitem{Ilyashenko}
Ilyashenko, Y.; Some open problems in real and complex dynamical systems.
Nonlinearity {\bf 21} (2008), no. 7.


\bibitem{Myrberg}
Myrberg, P. J.; \"{U}ber die Existenz Der Greenschen Funktionen auf Einer Gegebenen Riemannschen Fl\"{a}che.  
\textit{Acta Math.} {\bf 61} (1933), no. 1, 39--79.

\bibitem{PaunSibony}
P\u{a}un, M. and Sibony, N.; Value distribution theory for parabolic Riemann surfaces.  arXiv:1403.6596.

\bibitem{Rao}
Rajeswara Rao, K.V.;	
Fuchsian groups of convergence type and PoincarŽ\'{e} series of dimension -2, \textit{J. Math. Mech.} {\bf 18} (1968/1969), 629--644.	

\bibitem{Suita}
Suita, N; Capacities and Kernels on Riemann Surfaces.  \textit{Arch. Rational Mech. Anal.} {\bf 46} (1972), 212--217.

\bibitem{Royden}
Royden, H. L.; The extension of regular holomorphic maps. \textit{Proc. AMS} {\bf 43}, (1974), 306--310. 

\bibitem{Tsuji}
Tsuji, M.; 
Potential theory in modern function theory. Maruzen Co., Ltd., Tokyo 1959

\end{thebibliography}

\end{document}